\documentclass[a4paper, 12pt, reqno]{amsart}

\usepackage{amssymb}
\usepackage[backref=page]{hyperref}
\usepackage[margin=0.75in]{geometry}
\usepackage{enumitem}
\usepackage{tikz-cd}
\usepackage{zref-clever}

\zcsetup{cap, nameinlink}

\newtheorem{theorem}{Theorem}[section]
\newtheorem{lemma}[theorem]{Lemma}
\AddToHook{env/lemma/begin}{\zcsetup{countertype={theorem=lemma}}}
\newtheorem{proposition}[theorem]{Proposition}
\AddToHook{env/proposition/begin}{\zcsetup{countertype={theorem=proposition}}}
\newtheorem{corollary}[theorem]{Corollary}
\AddToHook{env/corollary/begin}{\zcsetup{countertype={theorem=corollary}}}

\theoremstyle{remark}
\newtheorem{remark}[theorem]{Remark}
\AddToHook{env/remark/begin}{\zcsetup{countertype={theorem=remark}}}
\newtheorem{example}[theorem]{Example}
\AddToHook{env/example/begin}{\zcsetup{countertype={theorem=example}}}

\zcRefTypeSetup{section}{
  Name-sg=\S,
  name-sg=\S,
  Name-pl=\S\S,
  name-pl=\S\S,
  namesep={}
}

\setlist[enumerate, 1]{label=\normalfont(\roman*)}

\DeclareMathOperator{\Ind}{Ind}
\DeclareMathOperator{\ch}{ch}
\DeclareMathOperator{\len}{len}
\DeclareMathOperator{\clen}{clen}
\DeclareMathOperator{\gr}{gr}
\DeclareMathOperator{\Hom}{Hom}
\DeclareMathOperator{\vspan}{span}
\DeclareMathOperator{\Frac}{Frac}
\DeclareMathOperator{\lp}{lp}
\DeclareMathOperator{\prt}{prt}
\DeclareMathOperator{\Der}{Der}
\DeclareMathOperator{\End}{End}
\DeclareMathOperator{\inc}{inc}

\newcommand{\vac}{{|0\rangle}}
\newcommand{\Vir}{{\mathrm{Vir}}}
\newcommand{\Id}{{\mathrm{Id}}}
\newcommand{\psn}{{\mathrm{psn}}}

\renewcommand*{\backref}[1]{}
\renewcommand*{\backrefalt}[4]{%
  \ifcase #1 (Not cited.)%
  \or        (Cited on page~#2.)%
  \else      (Cited on pages~#2.)%
  \fi}

\begin{document}

\setcounter{section}{-1}

\title{Boundary minimal models and the Rogers-Ramanujan identities}
\author{Diego Salazar}
\thanks{The author is partially supported by FAPERJ grant 201.445/2021 and CNPq grant 152005/2025-0}
\address{Universidade Federal do Rio de Janeiro, Macaé, RJ, Brazil}
\email{xdiego.sg@gmail.com}
\date{\today}

\begin{abstract}
  We determine when the irreducible modules $L(c_{p, q}, h_{m, n})$ over the simple Virasoro vertex algebras $\Vir_{p, q}$, where $p, q \ge 2$ are relatively prime with $0 < m < p$ and $0 < n < q$, are classically free.
  It turns out that this only happens with the boundary minimal models, i.e., with the irreducible modules over $\Vir_{2, 2s + 1}$ for $s \in \mathbb{Z}_+$.
  We thus obtain a complete description of the classical limits of these modules in terms of the jet algebra of the corresponding Zhu $C_2$-algebra.
  The Andrews-Gordon generalization of the Rogers-Ramanujan identities is used in the proof, and our results in turn provide a natural interpretation of these identities.

  \noindent \textbf{Keywords.} Vertex algebras, boundary minimal models, Rogers-Ramanujan identities, quantum algebra, combinatorics.
\end{abstract}

\maketitle

\numberwithin{equation}{section}

\section{Introduction}
\label{sec:introduction}

In \cite{li_abelianizing_2005}, Li introduced a decreasing filtration $(F_pV)_{p \in \mathbb{Z}}$ on an arbitrary vertex algebra $V$.
The associated graded space $\gr_F(V)$ with respect to this decreasing filtration carries the structure of a vertex Poisson algebra.
Li also introduced a decreasing filtration $(F_pM)_{p \in \mathbb{Z}}$ for modules $M$ over a vertex algebra $V$ and showed that the associated graded space $\gr_F(M)$ is a module over the vertex Poisson algebra $\gr_F(V)$.

Given a commutative algebra $R$, the jet algebra $JR$ is constructed as the smallest differential algebra containing $R$.
It was proved by Arakawa in \cite[Proposition 2.5.1]{arakawa_remark_2012} that there exists a natural surjection
\begin{equation*}
  JR_V \twoheadrightarrow \gr_F(V)
\end{equation*}
of vertex Poisson algebras, where $R_V$ is the Zhu $C_2$-algebra of $V$ as defined in \cite{zhu_modular_1996}.
When this is an isomorphism, we say $V$ is classically free.
For modules $M$ over the vertex algebra $V$, we also have a natural surjection
\begin{equation*}
  JR_V \otimes_{R_V} R_M \twoheadrightarrow \gr_F(M),
\end{equation*}
and we say $M$ is classically free if this is an isomorphism.
One may in turn regard the classically free vertex algebra $V$ or $V$-module $M$ as a quantization of $JR_V$ or $JR_V \otimes_{R_V} R_M$, respectively.

The question of classical freeness of the simple Virasoro vertex algebra $\Vir_{p, q}$ was studied in \cite{van_ekeren_chiral_2021}, and its answer is very simple.
If $q > p \ge 2$ are relatively prime integers, then $\Vir_{p, q}$ is classically free if and only if $p = 2$.
The main objective of this article is to solve this problem for all irreducible modules over the vertex algebras $\Vir_{p, q}$, namely $L(c_{p, q}, h_{m, n})$ for integers $m, n$ such that $0 < m < p$ and $0 < n < q$.

It turns out that this problem is deeply connected with the Rogers-Ramanujan $q$-series identities and the Andrews-Gordon generalization \cite{gordon_combinatorial_1961} and \cite{andrews_analytic_1974}, namely
\begin{equation}
  \label{eq:1}
  \sum_{k = (k_1, \dots, k_{s - 1}) \in \mathbb{N}^{s - 1}}\frac{q^{\frac{1}{2}kG^{(s)}k^\top + kB^{(s)}_{s - i}}}{(q)_{k_1}\dots(q)_{k_{s - 1}}} = \prod^\infty_{\substack{n = 1 \\ n \not\equiv 0, \pm i \bmod 2s + 1}}(1 - q^n)^{-1},
\end{equation}
for $s \in \mathbb{Z}_+$, $i \in \{1, \dots, s\}$, where
\begin{align*}
  G^{(s)} &= (2\min\{i, j\})_{i, j = 1}^{s - 1}, \\
  B^{(s)}_j &= (0, 0, \dots, 0, 1, 2, \dots, j)^\top \quad \text{for $j = 0, 1, \dots, s - 1$},
\end{align*}
and ${}^\top$ denotes the transpose matrix.

The identity \eqref{eq:1} has an interpretation in terms of integer partitions, as explained in \cite[\S8]{andrews_theory_1998}.
We represent a partition $\lambda$ either as a finite nonincreasing sequence of positive integers $[\lambda_1, \dots, \lambda_s]$ or as an infinite sequence $(f_n)_{n \in \mathbb{Z}_+} = [1^{f_1}, 2^{f_2}, \dots]$ of nonnegative integers in which only finitely many of the $f_n$ are nonzero, where $f_n$ is the number of times $n$ appears among the $\lambda_i$.
We have a natural partial order $\le$ on partitions given by $(g_n)_{n \in \mathbb{Z}_+} \le (f_n)_{n \in \mathbb{Z}_+}$ if $g_n \le f_n$ for $n \in \mathbb{Z}_+$.
A set $P$ of partitions is said to be a partition ideal if whenever $(f_n)_{n \in \mathbb{Z}_+} \in P$ and $(g_n)_{n \in \mathbb{Z}_+} \le (f_n)_{n \in \mathbb{Z}_+}$, then $(g_n)_{n \in \mathbb{Z}_+} \in P$.
We denote by $p(P, n)$ the number of partitions of $n$ that belong to $P$.
We say two partition ideals $P_1$ and $P_2$ are equivalent, denoted by $P_1 \sim P_2$, if $p(P_1, n) = p(P_2, n)$ for $n \in \mathbb{N}$.

Let $s \in \mathbb{Z}_+$, and let $i \in \{1, \dots, s\}$.
Gordon's generalization of the Rogers-Ramanujan identities asserts
\begin{equation*}
  A^{s, i} \sim P^{s, i},
\end{equation*}
where the partition ideals $A^{s, i}$ and $P^{s, i}$ are defined by:
\begin{align*}
  A^{s, i} &= \{\text{$(f_n)_{n \in \mathbb{Z}_+}$ is a partition} \mid \text{for $n \in \mathbb{Z}_+$, $f_n > 0$ implies $n \not\equiv 0, \pm i \bmod 2s + 1$}\}, \\
  R^{s, i} &= \{\text{$\lambda$ is a partition} \mid \text{$\lambda = [\lambda_1, \dots, \lambda_s]$ satisfies $\lambda_1 - \lambda_s \le 1$ or $\lambda = [1^i]$}\}, \\
  P^{s, i} &= \text{the set of partitions that do not contain any partition in $R^{s, i}$}.
\end{align*}

As explained in \cite[\S8]{andrews_theory_1998}, $A^{s, i}$ is a partition ideal of order $1$, while $P^{s, i}$ is a partition ideal of order $2$.
It is an open problem to characterize the equivalence classes of partition ideals.
For partition ideals of order $1$, this problem is completely solved (see \cite[Theorem 8.4]{andrews_theory_1998}).

In the context of representations of infinite-dimensional Lie algebras and vertex algebras, the left-hand side of \eqref{eq:1} is precisely the graded dimension, or character, of the irreducible $\Vir$-module $M = L(c_{2, 2s + 1}, h_{1, i})$.
Our main theorems concern a refined version of this character and the intimate relation between the structure of $\gr_F(M)$ and the partition ideals $P^{s, i}$.
The second theorem about a PBW basis of $L(c_{2, 2s + 1}, h_{1, i})$ was conjectured by Kac in \cite{kac_modular_1988} and proved using different methods in \cite{feigin_annihilating_1992} and \cite{feigin_coinvariants_1993}.
Here, we present another proof relying on the refined character formula of the next theorem.

\begin{theorem}
  \label{thr:1}
  The refined character of $L(c_{2, 2s + 1}, h_{1, i})$ is given by
  \begin{equation*}
    \ch_{L(c_{2, 2s + 1}, h_{1, i})}(t, q) = q^{h_{1, i}}\left(\sum_{k = (k_1, \dots, k_{s - 1}) \in \mathbb{N}^{s - 1}}t^{kB^{(s)}_{s - 1}}\frac{q^{\frac{1}{2}kG^{(s)}k^\top + kB^{(s)}_{s - i}}}{(q)_{k_1}\dots(q)_{k_{s - 1}}}\right).
  \end{equation*}
\end{theorem}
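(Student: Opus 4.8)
The plan is to identify the refined character with a bigraded Hilbert series of $\gr_F(M)$, where $M = L(c_{2, 2s + 1}, h_{1, i})$ and $V = \Vir_{2, 2s + 1}$, with $q$ recording the conformal weight and $t$ recording the length grading, i.e.\ the number of Virasoro modes. I would first exhibit a bihomogeneous spanning set of $\gr_F(M)$ indexed by the partition ideal $P^{s, i}$, the monomial attached to $\lambda = [\lambda_1, \dots, \lambda_r]$ carrying bidegree $(\len(\lambda), h_{1, i} + |\lambda|)$; then promote it to a basis by a dimension count; and finally rewrite the resulting partition generating function as the fermionic sum.

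The spanning set is best produced on the associated graded, where all relations are homogeneous. Since $R_V \cong \mathbb{C}[x]/(x^s)$, the surjection $JR_V \otimes_{R_V} R_M \twoheadrightarrow \gr_F(M)$ presents $\gr_F(M)$ as a quotient of an explicit jet-algebra module; the defining relations are the $\partial$-jets of the top component $\bar\omega^{\,s}$ of the singular vector of $V$, together with the module-specific relation responsible for the pattern $[1^i]$. A leading-term (differential Gr\"obner) analysis of this differential ideal, for a suitable monomial order, should leave exactly the standard monomials whose exponent partitions avoid every member of $R^{s, i}$: the initial terms of the jet relations forbid $s$ parts lying in a window of width one, while the module relation forbids $i$ parts equal to one. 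This is the precise point where $R^{s, i}$, hence $P^{s, i}$, enters, and it is the analogue for the fat point $\mathbb{C}[x]/(x^s)$ of the arc-space computation for the double point.

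The dimension count is then immediate. The surjection yields $\ch_{\gr_F(M)}(t, q) \preceq q^{h_{1, i}} \sum_{\lambda \in P^{s, i}} t^{\len(\lambda)} q^{|\lambda|}$ coefficientwise, so in particular $\ch_M(q) \le q^{h_{1, i}} \sum_n p(P^{s, i}, n) q^n$ at $t = 1$. But $\ch_M(q)$ is, as recalled above, the left-hand side of \eqref{eq:1}, which by Gordon's identity $A^{s, i} \sim P^{s, i}$ is exactly $q^{h_{1, i}} \sum_n p(P^{s, i}, n) q^n$. The inequality is therefore an equality at $t = 1$; since $\gr_F(M)$ and $M$ share their conformal-graded dimensions and $\gr_F(M)$ is a bigraded quotient of the jet module, the spanning set must be a basis and the $P^{s, i}$-monomials form a bihomogeneous basis of $\gr_F(M)$. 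Hence $\ch_{\gr_F(M)}(t, q) = q^{h_{1, i}} \sum_{\lambda \in P^{s, i}} t^{\len(\lambda)} q^{|\lambda|}$.

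It remains to recognize this as the fermionic sum, which is the refinement of Gordon's identity by the number of parts. Sorting $P^{s, i}$ by the vector $k = (k_1, \dots, k_{s - 1})$ whose partial sums $N_l = k_l + \cdots + k_{s - 1}$ count the successive difference-two blocks, the densest partition in each class has weight $\sum_l N_l^2 + \sum_{l \ge i} N_l = \tfrac{1}{2} k G^{(s)} k^T + k B^{(s)}_{s - i}$, its excitations contribute $\prod_l (q)_{k_l}^{-1}$, and its number of parts is $\sum_l N_l = k B^{(s)}_{s - 1}$; summing gives the stated series. The main obstacle is the leading-term analysis of the second paragraph: proving that the standard monomials of the jet ideal are governed exactly by $P^{s, i}$, with length equal to the number of parts and with the $[1^i]$-relation correctly accounted for by the module data. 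Once this is established the rest is formal, since linear independence is handed to us for free by Gordon's identity together with the known character.
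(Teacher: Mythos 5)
Your overall strategy is the same as the paper's: bound $L(c_{2,2s+1},h_{1,i})$ from above by a spanning set indexed by $P^{s,i}$, force equality at $t = 1$ using the known character together with Gordon--Andrews (this is \Cref{lmm:9}, which derives it from Feigin--Fuchs, \Cref{thr:6} --- you should not simply assert that $\ch_M(q)$ \emph{is} the left-hand side of \eqref{eq:1}, since at $t=1$ that is part of what is being proved), and then rewrite $\sum_{\lambda \in P^{s,i}}t^{\len(\lambda)}q^{\Delta(\lambda)}$ as the fermionic sum exactly as in \Cref{lmm:8}. But the step you explicitly defer --- the ``leading-term (differential Gr\"obner) analysis'' that ``should leave exactly'' the $P^{s,i}$-monomials --- is the entire mathematical content, and it is \Cref{lmm:10} of the paper: for \emph{every} $\eta \in R^{s,i}$ one must produce a homogeneous $u \in J(c_{2,2s+1},h_{1,i})$ with $\lp(u) = L_{\eta}|c_{2,2s+1},h_{1,i}\rangle$. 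The paper does this by computing the modes $(a_{2,2s+1})_{(s - sa + d - 1)}|c_{2,2s+1},h_{1,i}\rangle$ of the vacuum singular vector through the normal ordered product expansion (\Cref{lmm:2}), with nonvanishing of the leading coefficient reducing to $\binom{s-1}{d-1} + \binom{s-1}{d} = \binom{s}{d}$, and by citing Benoit--Saint-Aubin for the pattern $[1^i]$. Your proposed relation set --- jets of $L_{-2}^s$ plus a single module relation for $[1^i]$ --- cannot yield this: the excluded windows at $a = 1$, i.e., the patterns $[1^d, 2^{s-d}]$ with $2 \le d \le i - 1$, correspond to relations $u_d$ with leading terms $L_{-2}^{s-d}L_{-1}^d$ (\Cref{crl:1}) coming from the \emph{nonnegative} modes of the singular vector. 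Jets of $L_{-2}^s$ involve no $L_{-1}$ at all, and since the Poisson bracket with $L_{-2}$ preserves the $L_{-2}$-degree while multiplication raises it, only $u_1$ is Poisson-generated from $L_{-2}^s$ (\Cref{rmk:6}); the relations $u_2, \dots, u_{i-1}$ are genuinely additional inputs your presentation omits.

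There is also a filtration mismatch. The refined character in \Cref{thr:1} is defined via the PBW filtration of \Cref{sec:refined-character}, in which $t$ records length, whereas you propose to read it off as a bigraded Hilbert series of $\gr_F(M)$, the Li-filtration graded. These differ: compare \Cref{exa:5} with \Cref{exa:8}, where the Verma module gives $q^h\prod_k(1 - tq^k)^{-1}$ versus $q^h(1 - q)^{-1}\prod_{k \ge 2}(1 - t^{k-2}q^k)^{-1}$; see also \Cref{rmk:3}. Worse, length is not even a grading on $\gr_F(M)$: Li symbols of elements of $J(c_{2,2s+1},h_{1,i})$ are homogeneous for conformal weight and for Li degree, but terms of equal $\clen$ may have different $\len$, so ``the number of Virasoro modes'' is only a filtration there. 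The paper therefore carries out the Gr\"obner argument in the PBW graded $\gr(M(c,h)) \cong \mathbb{C}[L_{-1}, L_{-2}, \dots]$ modulo $K(c_{2,2s+1},h_{1,i})$ of \eqref{eq:2} and \Cref{prp:1}, proving \Cref{thr:2} first and deducing \Cref{thr:1} from the resulting basis; the jet module $JR_V \otimes_{R_V} R_M$, and the arc-space result \eqref{eq:9} you implicitly rely on, enter only afterwards in the proof of \Cref{thr:3}, where \Cref{thr:2} supplies the linear independence. Your dimension count and your fermionic rewriting are correct in outline, but without a proof of the leading-term lemma and with the analysis placed on the wrong associated graded, the argument has a genuine gap.
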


\begin{theorem}
  \label{thr:2}
  The set
  \begin{equation*}
    \{L_{-\lambda_1}L_{-\lambda_2}\dots L_{-\lambda_m}|c_{2, 2s + 1}, h_{1, i}\rangle \mid \lambda = [\lambda_1, \dots, \lambda_m] \in P^{s, i}\}
  \end{equation*}
  is a vector space basis of $L(c_{2, 2s + 1}, h_{1, i})$.
\end{theorem}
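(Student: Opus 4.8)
The plan is to prove spanning and linear independence separately and let the character formula bridge them. First I would show that the proposed family spans $L(c_{2, 2s + 1}, h_{1, i})$; granting this, linear independence is automatic. Indeed, setting $t = 1$ in \Cref{thr:1} collapses the refined character to the ordinary graded dimension, which is the left-hand side of \eqref{eq:1}, hence the product $\prod_{n \not\equiv 0, \pm i \bmod 2s + 1}(1 - q^n)^{-1}$, which by Gordon's identity $A^{s, i} \sim P^{s, i}$ equals $\sum_{n \ge 0} p(P^{s, i}, n)\, q^{h_{1, i} + n}$. Since an admissible monomial $L_{-\lambda_1} \cdots L_{-\lambda_m}|c_{2, 2s + 1}, h_{1, i}\rangle$ with $\lambda \in P^{s, i}$ has $L_0$-weight $h_{1, i} + |\lambda|$, the number of admissible monomials of weight $h_{1, i} + n$ is exactly $p(P^{s, i}, n) = \dim L(c_{2, 2s + 1}, h_{1, i})_{h_{1, i} + n}$. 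A spanning set meeting this cardinality in every graded component is a basis, so everything reduces to spanning.

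For spanning I would start from the full PBW spanning set $\{L_{-\lambda_1} \cdots L_{-\lambda_m}|c_{2, 2s + 1}, h_{1, i}\rangle\}$ indexed by all partitions $\lambda$ with parts $\ge 1$, and rewrite those with $\lambda \notin P^{s, i}$ using two families of relations arising from null vectors. The first comes from the singular vector that makes $\Vir_{2, 2s + 1}$ simple: it acts by zero on every module, and in the Li filtration its leading symbol is dictated by the Zhu $C_2$-algebra $R_{\Vir_{2, 2s + 1}} \cong \mathbb{C}[x]/(x^s)$, namely $L_{-2}^s$. Applying the translation operator $L_{-1}$ and using $[L_{-1}, L_{-n}] = (n - 1)L_{-n - 1}$ generates the jet relations $\partial^{(k)}(x^s)$, whose leading terms are precisely the blocks $[\lambda_1, \dots, \lambda_s]$ with $\lambda_1 - \lambda_s \le 1$. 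The second comes from the level-$i$ singular vector of the Verma module $M(c_{2, 2s + 1}, h_{1, i})$, whose leading term is $L_{-1}^i$; in the simple quotient this expresses $L_{-1}^i|c_{2, 2s + 1}, h_{1, i}\rangle$ through monomials having at least one part $\ge 2$. Together these relations target exactly the two forbidden patterns comprising $R^{s, i}$.

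Fixing a weight- and length-compatible monomial order under which $L_{-2}^s$, its $L_{-1}$-jets, and $L_{-1}^i$ are the leading terms of their relations, I would run the associated straightening algorithm: whenever a monomial contains one of the two forbidden blocks, the corresponding relation rewrites it as a combination of strictly smaller monomials, and well-foundedness of the order guarantees termination at monomials indexed by $P^{s, i}$. It is cleanest to carry this out in $\gr_F(L(c_{2, 2s + 1}, h_{1, i}))$: by Arakawa's surjection $JR_{\Vir_{2, 2s + 1}} \otimes_{R_{\Vir_{2, 2s + 1}}} R_M \twoheadrightarrow \gr_F(L(c_{2, 2s + 1}, h_{1, i}))$ the relations there are exactly the jets of the two defining relations, so the computation becomes an explicit differential-Gröbner/PBW argument in a jet algebra; a monomial spanning set of the jet algebra indexed by $P^{s, i}$ then maps onto a spanning set of $\gr_F$ and lifts to one of the module.

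The hard part will be the confluence of this reduction: verifying that the leading terms of the two relation families together generate an initial ideal whose standard monomials are precisely those avoiding $R^{s, i}$, with no unforeseen overlaps — for instance between a jet of $x^s$ and the module relation $L_{-1}^i$ — forcing extra leading terms or additional reductions among admissible monomials. Once this matching is confirmed, spanning follows, the count from \Cref{thr:1} and \eqref{eq:1} upgrades it to a basis, and as a byproduct the displayed surjection becomes an isomorphism, i.e.\ $L(c_{2, 2s + 1}, h_{1, i})$ is classically free.
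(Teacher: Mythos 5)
Your overall architecture --- produce a spanning set indexed by $P^{s, i}$, then use a character count to upgrade it to a basis --- is exactly the paper's strategy: its sketch shows the Gr\"obner standard monomials of $K(c_{2, 2s + 1}, h_{1, i})$ lie among $\{p_{\lambda} \mid \lambda \in P^{s, i}\}$ via \Cref{lmm:10} and \Cref{lmm:11}, and then \Cref{lmm:9} forces equality. But as written your count is circular: you invoke \Cref{thr:1}, which the paper deduces \emph{from} \Cref{thr:2}. What you actually need is only the $t = 1$ specialization, and that is available independently as \Cref{lmm:9}, proved from the Feigin--Fuchs character (\Cref{thr:6}) together with the analytic Andrews--Gordon identity; rerouting the citation repairs this. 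Relatedly, the confluence issue you single out as ``the hard part'' is moot in this architecture: for spanning you only need the one-sided statement that every forbidden block is the leading monomial of an element of the kernel (then any inadmissible monomial rewrites into strictly smaller ones and well-foundedness terminates the reduction); exactness of the initial ideal is never needed, which is precisely how the paper's sketch avoids computing a Gr\"obner basis in full.

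The genuine gap is in your mechanism for producing the first family of relations. In the vacuum module, translation jets of $L_{-2}^s\vac$ do have balanced-block leading terms (since $T\vac = 0$), but in the highest-weight module this fails: $L_{-1}|c_{2, 2s + 1}, h_{1, i}\rangle \neq 0$, so applying $L_{-1}$ repeatedly to the vector with leading term $L_{-2}^s|c_{2, 2s + 1}, h_{1, i}\rangle$ yields elements whose leading monomials are $L_{-1}^kL_{-2}^s|c_{2, 2s + 1}, h_{1, i}\rangle$ --- in degree reverse lexicographic order the longer monomial dominates $L_{\lambda}$ with $\lambda = [(a+1)^{s-d}, a^d]$ --- and these are already divisible by $L_{-2}^s$, hence contribute nothing new to the initial ideal. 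In particular your two families never reach the forbidden blocks containing parts equal to $1$, such as $[2^{s - k}, 1^k] \in R^{s, i}$, whose associated relations (recorded as $u_1, \dots, u_{i - 1}$ with $\lp(u_k) = L_{-2}^{s - k}L_{-1}^k$ in \Cref{crl:1}) are indispensable already for $s = i = 2$. The paper obtains all of these at once by taking the modes $(a_{2, 2s + 1})_{(s - sa + d - 1)}$ of the singular field acting on the highest-weight vector and verifying, via the normal-ordering expansion of \Cref{lmm:2}, that the coefficient of $L_{\lambda}$ is $\binom{s}{d} \neq 0$; this computation (\Cref{lmm:10}), together with the Benoit--Saint-Aubin result supplying the $L_{-1}^i$ relation, is the actual technical content of the theorem, and your proposal assumes it where it must be proved.
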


\begin{theorem}
  \label{thr:3}
  The boundary minimal models $L(c_{2, 2s + 1}, h_{1, i})$ are classically free as modules over the simple Virasoro vertex algebra $\Vir_{2, 2s + 1}$ for $s \in \mathbb{Z}_+$ and $i \in \{1, \dots, s\}$.
  If $p, q > 2$ then no irreducible $\Vir_{p, q}$-module is classically free.
\end{theorem}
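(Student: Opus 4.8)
The plan is to analyze, in both cases, Arakawa's natural surjection $\pi_M \colon JR_V \otimes_{R_V} R_M \twoheadrightarrow \gr_F(M)$ as a homomorphism of bigraded $JR_V$-modules, where the first grading is by conformal weight and the second records the Li filtration degree. Since $\gr_F(M)$ and $M$ have the same bigraded dimensions, $M$ is classically free precisely when $\pi_M$ is injective, and for a surjection of bigraded spaces with finite-dimensional components this happens if and only if the bigraded characters of source and target agree. I will use throughout that the Zhu $C_2$-algebra of $\Vir_{p,q}$ is $R_V = \mathbb{C}[x]/(x^N)$, with $x$ the class of the conformal vector and $N = (p-1)(q-1)/2$, so that $JR_V$ is the differential algebra $\mathbb{C}[x, x', x'', \dots]$ modulo the differential ideal generated by $x^N$, and that $R_M$ is a finitely generated graded $R_V$-module whose lowest weight component is spanned by the class $\bar v$ of the highest weight vector.

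For the boundary models I first invoke \cite{van_ekeren_chiral_2021}, by which $\Vir_{2, 2s+1}$ is classically free, so $JR_V = \gr_F(\Vir_{2,2s+1})$ and no relation is lost on the algebra side. The surjection $\pi_M$ then yields coefficientwise in $(t,q)$ the inequality $\ch(JR_V \otimes_{R_V} R_M) \ge \ch_{\gr_F(M)} = \ch_{L(c_{2,2s+1}, h_{1,i})}$, the right-hand side being the Andrews--Gordon sum of \Cref{thr:1}. To obtain the reverse inequality I would exhibit a spanning set of $JR_V \otimes_{R_V} R_M$ indexed by the partition ideal $P^{s,i}$: interpreting the differential generators $x, x', x'', \dots$ as the symbols of $L_{-2}, L_{-3}, \dots$ acting on $\bar v$, the jets of the relation $x^s = 0$ impose exactly the Andrews--Gordon difference condition forbidding $s$ parts inside an interval of length one, while the relations carried by $R_M$ impose the boundary condition corresponding to $[1^i]$. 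Comparing this spanning set with the basis of \Cref{thr:2} forces the two bigraded characters to coincide, whence $\pi_M$ is an isomorphism. The crux of this direction is precisely the reduction of every jet relation to the difference condition defining $P^{s,i}$ without over-counting; this is where Gordon's identity \eqref{eq:1} intervenes, certifying term by term that the jet-theoretic and representation-theoretic characters are equal.

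For the second statement, suppose $p, q > 2$, so that $N = (p-1)(q-1)/2 \ge 2$ and, again by \cite{van_ekeren_chiral_2021}, $\Vir_{p,q}$ fails to be classically free; thus $K := \ker\big(JR_V \twoheadrightarrow \gr_F(\Vir_{p,q})\big)$ is a nonzero graded ideal. Because $\gr_F(M)$ is a module over the vertex Poisson algebra $\gr_F(\Vir_{p,q}) = JR_V / K$, the ideal $K$ annihilates $\gr_F(M)$; and since $\pi_M$ is $JR_V$-linear, classical freedom of $M$ would force $K$ to annihilate $JR_V \otimes_{R_V} R_M$ as well. The strategy is therefore to show that $K$ acts nontrivially on $JR_V \otimes_{R_V} R_M$. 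This already fails for the vacuum module, where $JR_V \otimes_{R_V} R_M = JR_V$ and $K \cdot 1 = K \ne 0$, and I would propagate the phenomenon to the remaining $(m,n)$ by tracking the image of $1 \otimes \bar v$.

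The main obstacle lies here: because $\otimes_{R_V}$ is not exact, one cannot simply restrict to the cyclic submodule generated by $\bar v$, and a relation in $K$ might a priori be absorbed in $JR_V \otimes_{R_V} R_M$. I expect to resolve this by a direct bigraded Hilbert-series computation comparing $JR_V \otimes_{R_V} R_M$ with the Rocha--Caridi character of $L(c_{p,q}, h_{m,n})$: the jets of the single relation $x^N = 0$ together with the relations carried by $R_M$, which originate from the lower singular vectors of $M$ at levels $mn$ and $(p-m)(q-n)$, cannot account for all the relations forced by $\gr_F(\Vir_{p,q})$ once $N \ge 2$, so there is a bidegree in which the source of $\pi_M$ is strictly larger than its target. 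This strict inequality shows $\pi_M$ is not injective, and hence that no irreducible $\Vir_{p,q}$-module is classically free when $p, q > 2$.
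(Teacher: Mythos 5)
Your treatment of the boundary models follows the same skeleton as the paper's proof: invoke \cite{van_ekeren_chiral_2021} for the classical freeness of $\Vir_{2,2s+1}$, produce a spanning set of $JR_V \otimes_{R_V} R_M$ indexed by $P^{s,i}$, and play it off against the basis of \Cref{thr:2} to squeeze the two bigraded characters together. But you defer exactly the step you yourself call the crux. In the paper that step is genuine work: \Cref{lmm:10} computes the modes $(L_{-2}^s\vac)_{(s - sa + d - 1)}|c_{2,2s+1}, h_{1,i}\rangle$ via \Cref{lmm:2}, extracts the leading coefficient $\binom{s-1}{d-1} + \binom{s-1}{d} = \binom{s}{d}$, and cites \cite{benoit_degenerate_1988} for the $[1^i]$ relation; a minimal-counterexample (Gr\"obner-type) argument then yields the spanning statement \eqref{eq:8}, and the unique concatenation $\lambda = [\lambda^1, \lambda^2]$ with $\lambda^1 \in P^{s,1}$, $\lambda^1_{\len(\lambda^1)} \ge 3$, yields \eqref{eq:11}. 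Moreover your dichotomy --- jets of $x^s = 0$ impose the difference condition, ``the relations carried by $R_M$'' impose the boundary condition $[1^i]$ --- is too coarse: the difference condition on parts equal to $1$ and $2$ requires the whole family of relations in $R_M$ with leading terms $L_{-2}^{s-d}L_{-1}^{d}$ (the elements $u_k$ of \Cref{crl:1}, i.e., the case $a = 1$ of \Cref{lmm:10}), which arise from nonnegative modes of the vacuum singular vector acting on the highest weight vector, not from the jet relations in the left tensor factor and not from the single $[1^i]$ singular vector. Without establishing that these relations exist with these leading terms, your reverse character inequality is an assertion, not a proof --- though, to be fair, the plan is the correct one and coincides with the paper's.

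The genuine gap is in the second half. For $p, q > 2$ your argument is explicitly conjectural: ``I would propagate the phenomenon \dots I expect to resolve this by a direct bigraded Hilbert-series computation.'' The claimed strict inequality in some bidegree between $\ch(JR_V \otimes_{R_V} R_M)$ and the character of $L(c_{p,q}, h_{m,n})$ is precisely the statement to be proved, and computing the left-hand side requires detailed knowledge of $R_M$ and of the Tor obstructions you yourself flag; nothing in the proposal supplies either. The paper instead disposes of this case in three lines: the square $\phi_M \circ \inc = \inc \circ \phi_V$ commutes, where $\inc(a) = a \otimes 1$ on the jet side and $\inc(a) = a1$ on the graded side, so if $\phi_{L(c_{p,q}, h_{m,n})}$ were an isomorphism then $\phi_{\Vir_{p,q}}$ would be injective, contradicting \Cref{exa:11}. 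Note that this argument hinges on the injectivity of $a \mapsto a \otimes 1$, which the paper asserts as a ``natural inclusion''; your instinct that restriction to the cyclic submodule generated by $\bar v$ is dangerous (relations in $R_M$ can kill low powers of $L_{-2}$ applied to the highest-weight class, and $\otimes_{R_V}$ is not exact) is a legitimate concern about exactly this point. But having raised the concern, you are obligated either to justify the inclusion or to carry out the character comparison that replaces it; the proposal does neither, so the second statement of \Cref{thr:3} remains unproved in your write-up.
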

As a corollary, we obtain an explicit expression for the modules $\gr_F(L(c_{2, 2s + 1}, h_{1, i}))$ over the vertex Poisson algebra $\gr_F(\Vir_{2, 2s + 1})$.

The Andrews-Gordon series in \zcref{thr:1} already appeared in earlier works on vertex algebras such as \cite{feigin_quasi-particles_1993}, \cite{capparelli_rogersselberg_2006} and \cite{calinescu_vertex-algebraic_2008}, where they arise as characters of level $k = s - 1 \in \mathbb{N}$ principal subspaces of $\widehat{\mathfrak{sl}_2}$, and where related commutative algebraic structures capturing the Andrews-Gordon sum side were developed.

This article is organized as follows.
In \zcref{sec:prel-notat}, we review Verma modules and set notations related to partitions.
In \zcref{sec:modul-over-algebr}, we review certain filtrations and characters of algebras and modules, the main example being the PBW filtration of a Lie algebra.
In \zcref{sec:fields-over-vector}, we introduce an explicit formula related to the normal ordered product of a finite number of fields.
In \zcref{sec:modules-over-simple}, we recall the irreducible modules over the simple Virasoro vertex algebras.
In \zcref{sec:pbw-bases-characters}, we prove \zcref{thr:1} and \zcref{thr:2}.
In \zcref{sec:li-filtr}, we summarize the theory of filtrations of vertex algebras and their modules, with emphasis on explicit examples related to modules over the Virasoro Lie algebra.
In \zcref{sec:class-free-bound}, we recall Zhu $C_2$-algebra and the notion of classical freeness of vertex algebras and their modules.
Finally, we prove \zcref{thr:3}.
In the appendices, we recall notions related to Poisson algebras, jet algebras and vertex Poisson algebras.

Mathematical terms are typeset in \emph{italics} when they are officially defined.

\paragraph{Acknowledgements.}
I would like to thank Reimundo Heluani and Jethro Van Ekeren for their valuable suggestions.

\section{Preliminaries and notation}
\label{sec:prel-notat}

All vector spaces and all algebras are over $\mathbb{C}$, the field of complex numbers, unless otherwise stated.
The set of natural numbers $\{0, 1, \dots\}$ is denoted by $\mathbb{N}$, the set of integers is denoted by $\mathbb{Z}$, and the set of positive integers $\{1, 2, \dots\}$ is denoted by $\mathbb{Z}_+$.

First, we review the theory of representations of the Virasoro Lie algebra following \cite[\S3]{kac_bombay_2013}.
The \emph{Virasoro Lie algebra} is a Lie algebra given by
\begin{equation*}
  \Vir = \bigoplus_{n \in \mathbb{Z}}\mathbb{C}L_n \oplus \mathbb{C}C.
\end{equation*}
These elements satisfy the following commutation relations:
\begin{align*}
  [L_m, L_n] &= (m - n)L_{m + n} + \delta_{m, -n}\frac{m^3 - m}{12}C \quad \text{for $m, n \in \mathbb{Z}$}, \\
  [\Vir, C] &= 0.
\end{align*}
Let $(c, h) \in \mathbb{C}^2$.
We set $\Vir^\ge = \bigoplus_{n \in \mathbb{N}}\mathbb{C}L_n \oplus \mathbb{C}C$.
We make the subalgebra $\Vir^\ge$ of $\Vir$ act on $\mathbb{C}$ as follows:
\begin{equation*}
  \text{$L_n1 = 0$ for $n \in \mathbb{Z}_+$, $L_01 = h$ and $C1 = c$}.
\end{equation*}
The \emph{Verma representation} of $\Vir$ with \emph{highest weight} $(c, h)$ is defined as
\begin{equation*}
  M(c, h) = \Ind^\Vir_{\Vir^\ge}(\mathbb{C}) = U(\Vir) \otimes_{U(\Vir^\ge)} \mathbb{C},
\end{equation*}
where $\Vir$ acts by left multiplication.
We take $|c, h\rangle = 1\otimes1$ as \emph{highest weight vector}.

A \emph{composition (of $n \in \mathbb{N}$)} is a sequence $\lambda = [\lambda_1, \dots, \lambda_m]$ such that $\lambda_i \in \mathbb{Z}_+$ for $i = 1, \dots, m$ (and $\lambda_1 + \dots + \lambda_m = n$).
We also consider the \emph{empty composition} $\emptyset$, which is the unique composition of $0$.
Given a composition $\lambda = [\lambda_1, \dots, \lambda_m]$ and a permutation $\sigma \in S_m$, we define the composition
\begin{equation*}
  \lambda\sigma = [\lambda_{\sigma(1)}, \dots, \lambda_{\sigma(m)}].
\end{equation*}

For a composition $\lambda = [\lambda_1, \dots, \lambda_m]$, we define
\begin{equation*}
  L_\lambda = L_{-\lambda_1}\dots L_{-\lambda_m} \in U(\Vir),
\end{equation*}
the \emph{length of $\lambda$} as
\begin{equation*}
  \len(\lambda) = m,
\end{equation*}
the \emph{weight of $\lambda$} as
\begin{equation*}
  \Delta(\lambda) = \lambda_1 + \dots + \lambda_m,
\end{equation*}
and
\begin{equation*}
  p_\lambda = L_{-\lambda_1}\dots L_{-\lambda_m} \in \mathbb{C}[L_{-1}, L_{-2}, \dots].
\end{equation*}

For a composition $\lambda$ with exactly $m$ elements greater than $1$ and exactly $n$ elements equal to $1$, we define $\lambda'$ as the composition obtained from $\lambda$ by removing all $1$s.
We define the \emph{conformal length of $\lambda$} as
\begin{equation*}
  \clen(\lambda) = 2m + n
\end{equation*}
and
\begin{equation*}
  u_\lambda = p_{\lambda'}L_{-1}^n \in \bigoplus_{k \in \mathbb{N}}\mathbb{C}[L_{-2}, L_{-3}, \dots]L_{-1}^k.
\end{equation*}

A \emph{partition (of $n \in \mathbb{N}$)} is a composition $\lambda = [\lambda_1, \dots, \lambda_m]$ (of $n$) such that $\lambda_1 \ge \dots \ge \lambda_m$.
The notation $\lambda \vdash$ means $\lambda$ is a partition, and for $n \in \mathbb{N}$, the notation $\lambda \vdash n$ means $\lambda$ is a partition of $n$.
This notation also tacitly assumes that $\lambda$ is a partition and not just a composition.

Given a composition $\lambda$ of length $m$, there is a unique partition $\prt(\lambda)$ obtained by reordering the entries of $\lambda$ in nonincreasing order.

By the PBW theorem, the set
\begin{equation*}
  \{L_\lambda|c, h\rangle \mid \lambda \vdash\}
\end{equation*}
is a vector space basis of $M(c, h)$.
The representation $M(c, h)$ has a unique maximal proper subrepresentation $J(c, h)$, and the quotient
\begin{equation*}
  L(c, h) = M(c, h)/J(c, h)
\end{equation*}
is the \emph{irreducible highest weight representation} of $\Vir$ with highest weight $(c, h)$.

\section{Modules over algebras with a filtration}
\label{sec:modul-over-algebr}

Let $A$ be an associative (not necessarily commutative) algebra with unit $1$ and filtration $(A^p)_{p \in \mathbb{Z}}$ of subspaces of $A$ such that:
\begin{enumerate}
\item $A^p = 0$ for $p < 0$;
\item $1 \in A^0$;
\item $A^0 \subseteq A^1 \subseteq \dots$;
\item $A = \bigcup_{p \in \mathbb{N}}A^p$;
\item $A^pA^q \subseteq A^{p + q}$ for $p, q \in \mathbb{Z}$.
\end{enumerate}
Let
\begin{equation*}
  \gr(A) = \bigoplus_{p \in \mathbb{N}}A^p/A^{p - 1}
\end{equation*}
be the associated graded vector space.
The vector space $\gr(A)$ is an associative algebra with unit and multiplication given as follows.
For $p, q \in \mathbb{N}$, $a \in A^p$ and $b \in A^q$, we set
\begin{equation*}
  \gamma^p(a)\gamma^q(b) = \gamma^{p + q}(ab),
\end{equation*}
where $\gamma^p: A^p \to \gr(A)$ is the \emph{principal symbol map}, which is the composition of the natural maps $A^p \twoheadrightarrow A^p/A^{p - 1}$ and $A^p/A^{p - 1} \hookrightarrow \gr(A)$.
The unit of $\gr(A)$ is $\gamma^0(1)$.

Let $\partial$ be a derivation of $A$ respecting the filtration $(A^p)_{p \in \mathbb{Z}}$, i.e., it satisfies:
\begin{enumerate}
\item $\partial(ab) = \partial(a)b + a\partial(b)$ for $a, b \in A$;
\item $\partial(A^p) \subseteq A^p$ for $p \in \mathbb{Z}$.
\end{enumerate}
We can define
\begin{align*}
  \partial: \gr(A) &\to \gr(A), \\
  \partial(\gamma^p(a)) &= \gamma^p(\partial(a)) \quad \text{for $p \in \mathbb{Z}$ and $a \in A^p$},
\end{align*}
and it is a derivation of $\gr(A)$.

\begin{example}[\emph{PBW filtration of $U(\mathfrak{g})$}]
  \label{exa:1}
  Let $\mathfrak{g}$ be a Lie algebra.
  The PBW filtration of $U(\mathfrak{g})$, the universal enveloping algebra of $\mathfrak{g}$, is given by
  \begin{equation*}
    U(\mathfrak{g})^p = \vspan\{x_1x_2\dots x_s \mid s \le p, x_1, \dots, x_s \in \mathfrak{g}\} \quad \text{for $p \in \mathbb{Z}$}.
  \end{equation*}
  This filtration clearly satisfies axioms (i)--(v) above, and $\gr(U(\mathfrak{g}))$ is naturally isomorphic to $S(\mathfrak{g})$, the symmetric algebra of $\mathfrak{g}$, which is a polynomial algebra (see \cite[\S2]{dixmier_enveloping_1996} for details).
  Furthermore, if $\partial$ is a derivation of $\mathfrak{g}$ as a Lie algebra, then we can extend $\partial$ uniquely to a derivation of $U(\mathfrak{g})$ (see \cite[Proposition 2.4.9(i)]{dixmier_enveloping_1996}), and it respects the PBW filtration.
  Thus, it defines a derivation $\partial: \gr(U(\mathfrak{g})) \to \gr(U(\mathfrak{g}))$.
\end{example}

Let $M$ be an $A$-module with filtration $(M^p)_{p \in \mathbb{Z}}$ of subspaces of $M$ such that:
\begin{enumerate}
\item $M^p = 0$ for $p < 0$;
\item $M^0 \subseteq M^1 \subseteq \dots$;
\item $M = \bigcup_{p \in \mathbb{N}}M^p$;
\item $A^pM^q \subseteq M^{p + q}$ for $p, q \in \mathbb{Z}$.
\end{enumerate}

Let
\begin{equation*}
  \gr(M) = \bigoplus_{p \in \mathbb{N}}M^p/M^{p - 1}
\end{equation*}
be the associated graded vector space.
Then $\gr(M)$ is a $\gr(A)$-module with operations given as follows.
For $p, q \in \mathbb{N}$, $a \in A^p$ and $u \in M^q$, we set
\begin{equation*}
  \gamma^p(a)\gamma^q_M(u) = \gamma^{p + q}_M(au),
\end{equation*}
where $\gamma^p_M: M^p \to \gr(M)$ is the principal symbol map, which is the composition of the natural maps $M^p \twoheadrightarrow M^p/M^{p - 1}$ and $M^p/M^{p - 1} \hookrightarrow \gr(M)$.

Let $H \in \End(M)$ be a diagonalizable operator.
Defining the eigenspaces of $M$ as
\begin{equation*}
  M_\Delta = \ker(H - \Delta\Id_M) \quad \text{for $\Delta \in \mathbb{C}$},
\end{equation*}
we have
\begin{equation*}
  M = \bigoplus_{\Delta \in \mathbb{C}}M_\Delta.
\end{equation*}
We say the filtration $(M^p)_{p \in \mathbb{Z}}$ is \emph{graded} with respect to the diagonalizable operator $H$ if each subspace $M^p$ is graded, i.e., if
\begin{equation}
  \label{eq:2}
  H(M^p) \subseteq M^p \quad \text{for $p \in \mathbb{Z}$}.
\end{equation}
We assume we have a diagonalizable operator $H$ and a graded filtration $(M^p)_{p \in \mathbb{Z}}$.
By \eqref{eq:2} and \cite[Corollary 1.1]{kac_bombay_2013}, we have
\begin{equation}
  \label{eq:3}
  M^p = \sum_{\Delta \in \mathbb{C}}M^p \cap M_\Delta \quad \text{for $p \in \mathbb{Z}$}.
\end{equation}

By property \eqref{eq:2}, we can define an operator $H \in \End(\gr(M))$ as $H(\gamma^p_M(u)) = \gamma^p_M(Hu)$ for $p \in \mathbb{N}$ and $u \in M^p$.
For $p \in \mathbb{N}$ and $\Delta \in \mathbb{C}$, we define $M^p_\Delta = M^p \cap M_\Delta$.
For $\Delta \in \mathbb{C}$, we define $\gr(M)_\Delta = \bigoplus_{p \in \mathbb{N}}\gamma_M^p(M^p_\Delta)$.
Then $Hu = \Delta u$ for $u \in \gr(M)_\Delta$, and the family of subspaces $(\gr(M)_\Delta)_{\Delta \in \mathbb{C}}$ is linearly independent.
By \eqref{eq:3}, we have the refined grading
\begin{equation}
  \label{eq:4}
  \gr(M) = \bigoplus_{\substack{p \in \mathbb{N} \\ \Delta \in \mathbb{C}}}\gamma^p_M(M^p_\Delta).
\end{equation}

By \eqref{eq:4}, when $\dim(M_\Delta) < \infty$ for $\Delta \in \mathbb{C}$, it is natural to define the \emph{character of $M$} and the \emph{refined character of $M$} as:
\begin{align*}
  \ch_M(q) &= \sum_{\Delta \in \mathbb{C}}\dim(M_\Delta)q^\Delta = \sum_{\Delta \in \mathbb{C}}\dim(\gr(M)_\Delta)q^\Delta, \\
  \ch_M(t, q) &= \sum_{\substack{p \in \mathbb{N} \\ \Delta \in \mathbb{C}}}\dim(\gamma_M^p(M^p_\Delta))t^pq^\Delta.
\end{align*}

The category of modules over $A$ with the given filtration $(A^p)_{p \in \mathbb{Z}}$ is given by modules $M$ with filtration $(M^p)_{p \in \mathbb{Z}}$ satisfying conditions (i)--(iv) above.
A homomorphism $f: M \to N$ must satisfy $f(M^p) \subseteq N^p$ for $p \in \mathbb{Z}$.

If $f: M_1 \to M_2$ is a homomorphism of $A$-modules, then
\begin{align*}
  \gr(f): \gr(M_1) &\to \gr(M_2), \\
  \gr(f)(\gamma_{M_1}^p(u)) &= \gamma_{M_2}^p(f(u)) \quad \text{for $p \in \mathbb{N}$ and $u \in M_1^p$}
\end{align*}
defines a homomorphism of $\gr(A)$-modules.
If the filtrations of $M_1$ and $M_2$ are graded, then we require that $f$ respects the gradings of $M_1$ and $M_2$, i.e., we require that $f\circ H^{M_1} = H^{M_2}\circ f$, and this implies that $\gr(f)$ also respects the gradings of $\gr(M_1)$ and $\gr(M_2)$.

Therefore, we obtain a functor
\begin{equation*}
  \gr: \{\text{(graded) $A$-modules}\} \to \{\text{(graded) $\gr(A)$-modules}\}.
\end{equation*}

From now on, some subscripts or superscripts will be omitted, so for example $\gamma^p_M$ simplifies to $\gamma^p$.

The following simple linear algebra proposition relates a basis of the vector space $M$ to a basis of its associated graded $\gr(M)$, and the proof is straightforward.

\begin{proposition}
  \label{prp:1}
  Let $M$ be a vector space, let $(M^p)_{p \in \mathbb{Z}}$ be a family of subspaces of $M$ satisfying properties \emph{(i)--(iii)} above, and let $(B^p)_{p \in \mathbb{N}}$ be an increasing family of sets satisfying $B^p \subseteq M^p$ for $p \in \mathbb{N}$.
  Then $\{v + M^{p - 1} \mid v \in B^p \setminus B^{p - 1}\}$ spans (resp.\ is a basis of) $M^p/M^{p - 1}$ for $p \in \mathbb{N}$ if and only if $B^p$ spans (resp.\ is a basis of) $M^p$ for $p \in \mathbb{N}$ ($B^{-1} = \emptyset$).
  In particular, if $\{\gamma^p(v) \mid p \in \mathbb{N}, v \in B^p \setminus B^{p - 1}\}$ is a basis of $\gr(M)$, then $\bigcup_{p \in \mathbb{N}}B^p$ is a basis of $M$.
\end{proposition}

\begin{example}[$\gr(M(c, h))$]
  \label{exa:2}
  We consider the subalgebra $\Vir^- = \bigoplus_{n < 0}\mathbb{C}L_n$ of the Virasoro Lie algebra $\Vir$.
  We have a natural isomorphism $\mathbb{C}[L_{-1}, L_{-2}, \dots] \cong \gr(U(\Vir^-))$, and we consider the Verma module $M(c, h)$ as a module over $U(\Vir^-)$ acting by left multiplication.
  The filtration of $M(c, h)$ is given by $M(c, h)^p = U(\Vir^-)^p|c, h\rangle$ for $p \in \mathbb{Z}$.
  Let $(c, h) \in \mathbb{C}^2$.
  By \cite[\S2]{dixmier_enveloping_1996}, we have a natural isomorphism
  \begin{align*}
    \gr(M(c, h)) &\xrightarrow{\sim} \mathbb{C}[L_{-1}, L_{-2}, \dots] \cong \gr(U(\Vir^-)), \\
    \gamma^{\len(\lambda)}(L_\lambda|c, h\rangle) & \mapsto p_\lambda \quad \text{for $\lambda$ a composition}.
  \end{align*}
  The diagonalizable operator $H = L_0$ acts by left multiplication.
  For the Verma module $M(c, h)$, the characters are given by:
  \begin{align*}
    \ch_{M(c, h)}(t, q) &= \frac{q^h}{\prod_{k \in \mathbb{Z}_+}(1 - tq^k)}, \\
    \ch_{M(c, h)}(q) &= \frac{q^h}{\prod_{k \in \mathbb{Z}_+}(1 - q^k)} = \frac{q^h}{(q)_\infty}.
  \end{align*}
\end{example}

\begin{example}[$\ch_{L(1/2, 0)}(t, q)$]
  \label{exa:3}
  In \cite{andrews_singular_2022}, the refined character of the \emph{Virasoro minimal model} $\Vir_{3, 4} = L(1/2, 0)$, also known as the \emph{Ising model}, is computed explicitly.
  Interestingly, this character is connected to Nahm sums for the matrix $\left(\begin{smallmatrix} 8 & 3 \\ 3 & 2 \end{smallmatrix}\right)$ (see \cite{Nahm2007}) and can be explicitly expressed as
  \begin{equation*}
    \ch_{L(1/2, 0)}(t, q) = \sum_{k_1, k_2 \in \mathbb{N}}t^{2k_1 + k_2}\frac{q^{4k_1^2 + 3k_1k_2 + k_2^2}}{(q)_{k_1}(q)_{k_2}}(1 - q^{k_1} + q^{k_1 + k_2}),
  \end{equation*}
  where $(q)_k = \prod_{j = 1}^k(1 - q^j) \in \mathbb{C}[q]$ denotes the \emph{$q$-Pochhammer symbol}.
\end{example}

We pick a highest weight $(c, h)$.
We have a natural epimorphism of $U(\Vir^-)$-modules
\begin{align*}
  \pi_{c, h}: M(c, h) &\twoheadrightarrow L(c, h), \\
  \pi_{c, h}(u) &= u + J(c, h),
\end{align*}
and it satisfies $\ker(\pi_{c, h}) = J(c, h)$.
Applying the functor $\gr$, we obtain an epimorphism of modules over $\gr(U(\Vir^-))$
\begin{equation*}
  \gr(\pi_{c, h}): \gr(M(c, h)) \twoheadrightarrow \gr(L(c, h)),
\end{equation*}
and this produces a natural isomorphism of modules over $\gr(U(\Vir^-))$
\begin{equation*}
  \gr(M(c, h))/K(c, h) \xrightarrow{\sim} \gr(L(c, h)),
\end{equation*}
where
\begin{equation*}
  K(c, h) = \ker(\gr(\pi_{c, h})).
\end{equation*}
Explicitly, we have
\begin{equation}
  \label{eq:5}
  K(c, h) = \sum_{p \in \mathbb{N}}\gamma^p(J(c, h) \cap M(c, h)^p).
\end{equation}

\section{Fields over vector spaces}
\label{sec:fields-over-vector}

The vector space of \emph{formal distributions in $n \in \mathbb{N}$ variables}, denoted by $\mathbb{C}[[x_1^{\pm1}, \dots, x_n^{\pm1}]]$, is the set of functions $f: \mathbb{Z}^n \to \mathbb{C}$, written as $f(x_1, \dots, x_n) = \sum_{m_1, \dots, m_n \in \mathbb{Z}}f_{m_1, \dots, m_n}x_1^{m_1}\dots x_n^{m_n}$, with the natural operations of addition and multiplication by a scalar.
The field of \emph{formal Laurent series}, denoted by $\mathbb{C}((x))$, is the subspace of elements $f(x) \in \mathbb{C}[[x^{\pm1}]]$ such that there is $N \in \mathbb{Z}$ with $f_n = 0$ for $n \le N$.
We also have $\mathbb{C}((x)) = \Frac(\mathbb{C}[[x]])$, so $\mathbb{C}((x))$ is actually a field.
If $V$ is a vector space, we similarly define $V[[x_1^{\pm1}, \dots, x_n^{\pm1}]]$ and $V((x))$, but in this case, $V((x))$ is only a vector space.

Let $V$ be a vector space.
The \emph{Fourier expansion of a formal distribution $a(z) \in V[[z^{\pm1}]]$}, written as $a(z) = \sum_{n \in \mathbb{Z}}a_nz^n$, is conventionally written in the theory of vertex algebras as
\begin{equation*}
  a(z) = \sum_{n \in \mathbb{Z}}a_{(n)}z^{-n - 1},
\end{equation*}
where
\begin{equation*}
  a_{(n)} = a_{-n - 1}.
\end{equation*}

Let $V$ be a vector space, and let $a(z) \in \End(V)[[z^{\pm1}]]$ be a formal distribution.
We set:
\begin{align*}
  a(z)_+ &= \sum_{n \le -1}a_{(n)}z^{-n - 1}, \\
  a(z)_- &= \sum_{n \ge 0}a_{(n)}z^{-n - 1}.
\end{align*}

A formal distribution $a(z)$ is a \emph{field} if
\begin{equation*}
  a(z)b = \sum_{n \in \mathbb{Z}}a_{(n)}bz^{-n - 1} \in V((z)) \quad \text{for $b \in V$}.
\end{equation*}
The vector space of fields over $V$ is denoted by $\mathcal{F}(V)$.
We note that
\begin{equation*}
  \mathcal{F}(V) = \Hom(V, V((z))).
\end{equation*}
Therefore, we can define a field $a(z)$ by defining $a(z)b \in V((z))$ for $b \in V$.

\begin{proposition}[{\cite[Proposition 3.3.2]{nozaradan_introduction_2008}}]
  \label{prp:2}
  Let $a(z), b(z) \in \mathcal{F}(V)$ be two fields.
  Then $:a(z)b(z): \in \End(V)[[z^{\pm1}]]$ is again a field, where $:a(z)b(z):$ is defined by
  \begin{equation*}
    :a(z)b(z):c = a(z)_+b(z)c + b(z)a(z)_-c \quad \text{for $c \in V$}.
  \end{equation*}
\end{proposition}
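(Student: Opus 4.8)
The plan is to unwind the definition of a field. Since $\mathcal{F}(V) = \Hom(V, V((z)))$, to prove that $:a(z)b(z):$ is a field it suffices to check that $:a(z)b(z):c \in V((z))$ for every $c \in V$; that is, that the formal distribution $a(z)_+b(z)c + b(z)a(z)_-c$ has only finitely many nonzero coefficients in negative powers of $z$. I would treat the two summands separately, since they lie in $V((z))$ for rather different reasons.

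For the second summand I would first observe that $a(z)_-c = \sum_{n \ge 0}a_{(n)}c\,z^{-n - 1}$ is in fact a \emph{finite} sum. Indeed, since $a(z)$ is a field we have $a(z)c \in V((z))$, so $a_{(n)}c = 0$ for all sufficiently large $n$; say $a_{(n)}c = 0$ for $n > N$. Hence $a(z)_-c = \sum_{n = 0}^{N}a_{(n)}c\,z^{-n - 1}$, and applying $b(z)$ term by term gives $b(z)a(z)_-c = \sum_{n = 0}^{N}\bigl(b(z)a_{(n)}c\bigr)z^{-n - 1}$. Each $b(z)a_{(n)}c$ lies in $V((z))$ because $b(z)$ is a field, and multiplying by $z^{-n - 1}$ and summing finitely many such terms keeps us inside $V((z))$.

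For the first summand the argument is less immediate, and this is the step I expect to be the main obstacle, since here neither factor is a finite sum. Writing $b(z)c = \sum_{m \ge -M}c_mz^m \in V((z))$ for suitable $M \in \mathbb{Z}$ and $c_m \in V$, and noting that $a(z)_+ = \sum_{k \ge 0}a_{(-k - 1)}z^k$ involves only nonnegative powers of $z$, I would form the product $a(z)_+b(z)c$ and check two things. First, the product is well defined as a formal distribution: the coefficient of $z^j$ is $\sum a_{(-k - 1)}c_m$ over $k + m = j$ with $k \ge 0$ and $m \ge -M$, and this is a finite sum because those constraints force $0 \le k \le j + M$. Second, every nonzero power $j = k + m$ satisfies $j \ge -M$, so only finitely many negative powers of $z$ occur; hence $a(z)_+b(z)c \in V((z))$. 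The essential point is exactly this interplay: the truncation to nonnegative powers in $a(z)_+$ together with the bounded-below property of $b(z)c$ simultaneously guarantees that each coefficient is a finite sum and that the result is bounded below.

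Finally, since $V((z))$ is closed under addition, the sum $a(z)_+b(z)c + b(z)a(z)_-c$ lies in $V((z))$ for every $c \in V$, so $:a(z)b(z): \in \Hom(V, V((z))) = \mathcal{F}(V)$, completing the proof.
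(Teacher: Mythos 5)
Your proposal is correct and takes essentially the same route as the proof of \cite[Proposition 3.3.2]{nozaradan_introduction_2008}, to which the paper defers: the two key observations --- that $a(z)_-c$ truncates to a finite sum by the field property of $a(z)$, and that $a(z)_+$ containing only nonnegative powers of $z$ combines with the bounded-below property of $b(z)c \in V((z))$ to make $a(z)_+b(z)c$ both well defined coefficientwise and again a Laurent series --- are exactly the standard ones. Nothing is missing.
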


We have thus defined the notion of \emph{normal ordered product between fields $a(z), b(z) \in \mathcal{F}(V)$}, denoted by $:a(z)b(z):$.
In general, the operation of normal ordered product is neither commutative nor associative.
We follow the convention that the normal ordered product is read from right to left, so that, by definition,
\begin{equation*}
  :a(z)b(z)c(z): = :a(z)(:b(z)c(z):):.
\end{equation*}

\begin{lemma}[{\cite[Proposition 3.3.3]{nozaradan_introduction_2008}}]
  \label{lmm:1}
  Let $a(z), b(z) \in \mathcal{F}(V)$ be two fields.
  Their normal ordered product is written explicitly as
  \begin{equation*}
    :a(z)b(z): = \sum_{j \in \mathbb{Z}}:a(z)b(z):_{(j)}z^{-j - 1},
  \end{equation*}
  with
  \begin{equation*}
    :a(z)b(z):_{(j)}c = \sum_{n \le -1}a_{(n)}b_{(j - n - 1)}c + \sum_{n \ge 0}b_{(j - n - 1)}a_{(n)}c \quad \text{for $c \in V$}.
  \end{equation*}
\end{lemma}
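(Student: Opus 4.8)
The plan is to compute directly from the definition of the normal ordered product recorded in \Cref{prp:2}, namely $:a(z)b(z):c = a(z)_+b(z)c + b(z)a(z)_-c$ for $c \in V$, and to read off the coefficient of $z^{-j-1}$ from each of the two summands separately. Since \Cref{prp:2} already guarantees that $:a(z)b(z):$ is a field, the expansion $:a(z)b(z): = \sum_{j \in \mathbb{Z}}:a(z)b(z):_{(j)}z^{-j - 1}$ is just the definition of Fourier coefficients, so the entire content of the lemma is the identification of $:a(z)b(z):_{(j)}c$.

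First I would record the local finiteness that makes every product below legitimate. Because $a(z)$ and $b(z)$ are fields, for each fixed $c \in V$ there is $N \in \mathbb{Z}$ with $a_{(n)}c = 0$ and $b_{(n)}c = 0$ for $n \ge N$; this is precisely the statement $a(z)c, b(z)c \in V((z))$. Now $a(z)_+ = \sum_{n \le -1}a_{(n)}z^{-n - 1} = \sum_{m \ge 0}a_{(-m - 1)}z^{m}$ is a formal power series in $z$, while $b(z)c \in V((z))$ is bounded below in $z$; hence in the product $a(z)_+b(z)c$ the coefficient of each power of $z$ is a finite sum. Dually, $a(z)_-c = \sum_{n \ge 0}a_{(n)}c\,z^{-n - 1}$ is actually a finite Laurent expression in $z$, so $b(z)a(z)_-c$ is a finite sum of elements of $V((z))$ and again lies in $V((z))$. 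Thus both summands are honest elements of $V((z))$ and coefficient extraction is unambiguous.

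Next I would expand the two summands as double series and collect $z^{-j - 1}$. Writing $b(z)c = \sum_{k \in \mathbb{Z}}b_{(k)}c\,z^{-k - 1}$, we get $a(z)_+b(z)c = \sum_{n \le -1}\sum_{k \in \mathbb{Z}}a_{(n)}b_{(k)}c\,z^{-(n + k) - 2}$, and the requirement $-(n + k) - 2 = -j - 1$ forces $k = j - n - 1$, contributing $\sum_{n \le -1}a_{(n)}b_{(j - n - 1)}c$. Identically, $b(z)a(z)_-c = \sum_{k \in \mathbb{Z}}\sum_{n \ge 0}b_{(k)}a_{(n)}c\,z^{-(k + n) - 2}$, and $-(k + n) - 2 = -j - 1$ yields $k = j - n - 1$, contributing $\sum_{n \ge 0}b_{(j - n - 1)}a_{(n)}c$. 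Adding the two contributions gives exactly the asserted formula.

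The computation carries no genuine obstacle; the only point demanding care is the bookkeeping of the two index ranges $n \le -1$ and $n \ge 0$, which arise precisely from the splitting $a(z) = a(z)_+ + a(z)_-$, together with the verification that each resulting sum is finite. Indeed, in the first sum $n \to -\infty$ forces $j - n - 1 \to +\infty$, so $b_{(j - n - 1)}c$ vanishes for $n$ sufficiently negative, and in the second sum $a_{(n)}c$ vanishes for $n$ sufficiently large; this is the one place where the field hypothesis is genuinely used, and it is what guarantees that $:a(z)b(z):_{(j)}c$ is well-defined.
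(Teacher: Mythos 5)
Your proof is correct: the paper gives no argument for this lemma, deferring entirely to the citation of \cite[Proposition 3.3.3]{nozaradan_introduction_2008}, and your direct coefficient extraction from the definition in \Cref{prp:2} is exactly the standard computation carried out there. The index bookkeeping ($k = j - n - 1$ in both summands) and the finiteness verification --- $b_{(j-n-1)}c = 0$ for $n$ sufficiently negative and $a_{(n)}c = 0$ for $n$ sufficiently large, which is where the field hypothesis enters --- are both handled correctly.
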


\begin{lemma}
  \label{lmm:2}
  Let $V$ be a vector space.
  We consider $s$ fields $a^1(z), \dots, a^s(z) \in \mathcal{F}(V)$, with $s \ge 2$, and let $b \in V$.
  For $l \in \mathbb{Z}$,
  \begin{equation*}
    :a^1(z)a^2(z)\dots a^s(z):_{(l)}b = \sum_{n_1, \dots, n_{s - 1} \in \mathbb{N}}\sum_{k = 0}^{s - 1}R^{l, k}_{n_1, \dots, n_{s - 1}}(a^1(z), \dots, a^s(z))b,
  \end{equation*}
  where $R^{l, k}_{n_1, \dots, n_{s - 1}}(a^1(z), \dots, a^s(z))$ is the sum of $\binom{s - 1}{k}$ terms given by
  \begin{align*}
    &R^{l, k}_{n_1, \dots, n_{s - 1}}(a^1(z), \dots, a^s(z)) = \\
    &\sum_{\substack{1 \le i_1 < \dots < i_k \le s - 1 \\ 1 \le j_1 < \dots < j_{s - 1 - k} \le s - 1 \\ \{i_1, \dots, i_k\} \cup \{j_1, \dots, j_{s - 1 - k}\} = \{1, \dots, s - 1\}}}a^{j_1}_{(-n_{j_1} - 1)}\dots a^{j_{s - 1 - k}}_{(-n_{j_{s - 1 - k}} - 1)}a^s_{(l - k - \sum_{r = 1}^kn_{i_r} + \sum_{r = 1}^{s - 1 - k}n_{j_r})}a^{i_k}_{(n_{i_k})}\dots a^{i_1}_{(n_{i_1})}.
  \end{align*}
\end{lemma}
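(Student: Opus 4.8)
The plan is to induct on the number $s$ of fields, using the right-to-left reading convention ${:}a^1(z)\cdots a^s(z){:} = {:}a^1(z)\bigl({:}a^2(z)\cdots a^s(z){:}\bigr){:}$ to peel off the leftmost field and reduce to a normal ordered product of two fields, which is governed by \Cref{lmm:1}. For the base case $s = 2$ there is a single summation index $n_1 \in \mathbb{N}$ and $k \in \{0, 1\}$, and I would simply reindex the two sums of \Cref{lmm:1} (writing $n = -n_1 - 1$ in the creation sum and $n = n_1$ in the annihilation sum) and check that the $k = 0$ term reproduces $a^1_{(-n_1-1)}a^2_{(l+n_1)}b$ while the $k = 1$ term reproduces $a^2_{(l-n_1-1)}a^1_{(n_1)}b$, matching the asserted formula verbatim.

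For the inductive step, I would set $B(z) = {:}a^2(z)\cdots a^s(z){:}$, which is again a field by iterated application of \Cref{prp:2}, and apply \Cref{lmm:1} to ${:}a^1(z)B(z){:}_{(l)}b$. This produces two sums, a creation sum $\sum_{n_1 \ge 0} a^1_{(-n_1-1)} B_{(l+n_1)} b$ and an annihilation sum $\sum_{n_1 \ge 0} B_{(l-n_1-1)} a^1_{(n_1)} b$. Into each I would substitute the induction hypothesis applied to the $s-1$ fields $a^2(z), \dots, a^s(z)$, whose ``non-last'' indices run over $\{2, \dots, s-1\}$, thereby expanding $B_{(l+n_1)}$ and $B_{(l-n_1-1)}$ respectively.

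The heart of the argument is a term-by-term matching keyed to the role of the index $1$. In the $s$-field formula every summand is determined by a partition of $\{1, \dots, s-1\}$ into a $k$-set $\{i_1 < \cdots < i_k\}$ (the annihilation modes, read on the right) and its complement $\{j_1 < \cdots < j_{s-1-k}\}$ (the creation modes, read on the left). The creation sum produces exactly the summands in which $1 = j_1$, contributing the leftmost factor $a^1_{(-n_1-1)}$ and leaving a partition of $\{2, \dots, s-1\}$ with $i$-set of size $k$; here the central mode of $a^s$ has its index shifted by the substitution $l \mapsto l + n_1$, which I would verify is precisely the shift turning the $(s-1)$-field central index into the $s$-field one. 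Symmetrically, the annihilation sum produces the summands in which $1 = i_1$, contributing the rightmost factor $a^1_{(n_1)}$ and a partition of $\{2, \dots, s-1\}$ with $i$-set of size $k-1$, via the central index shift $l \mapsto l - n_1 - 1$ together with the relabeling $k \mapsto k - 1$. Combining the two cases, for each $k \in \{0, \dots, s-1\}$ the summands with $1 \in \{j_r\}$ (present when $k \le s-2$) and those with $1 \in \{i_r\}$ (present when $k \ge 1$) together exhaust all partitions of $\{1, \dots, s-1\}$; at the level of counting this is Pascal's identity $\binom{s-1}{k} = \binom{s-2}{k} + \binom{s-2}{k-1}$.

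I expect the main obstacle to be purely bookkeeping: keeping the long central index $l - k - \sum_{r=1}^k n_{i_r} + \sum_{r=1}^{s-1-k} n_{j_r}$ of the mode $a^s$ correct through both substitutions, and confirming that the two ranges of $k$ overlap precisely so that no partition is counted twice or omitted (in particular the boundary values $k = 0$, arising only from the creation sum, and $k = s-1$, arising only from the annihilation sum). Once the dictionary ``index $1$ is a creation index $\iff$ it lies in the $j$-set $\iff$ it comes from the creation sum of \Cref{lmm:1}'' is in place, the match is forced and the induction closes.
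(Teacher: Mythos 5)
Your proof is correct and takes essentially the same approach as the paper, whose entire proof of this lemma is the one-line remark that it follows from \Cref{lmm:1} and induction on $s$. Your write-up is simply that intended argument carried out in full detail: the base case $s=2$ is a reindexing of \Cref{lmm:1}, and the inductive step peels off $a^1(z)$, expands via \Cref{lmm:1}, and matches summands according to whether the index $1$ lies in the $i$-set or the $j$-set, with Pascal's identity $\binom{s-1}{k} = \binom{s-2}{k} + \binom{s-2}{k-1}$ accounting for the term count.
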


\begin{proof}
  This follows from \zcref{lmm:1} and induction on $s$.
\end{proof}

\section{Modules over the simple Virasoro vertex algebras}
\label{sec:modules-over-simple}

Let $c \in \mathbb{C}$.
We make the subalgebra $\Vir^\ge \oplus \mathbb{C}L_{-1}$ of $\Vir$ act on $\mathbb{C}$ as follows:
\begin{equation*}
  \text{$L_n1 = 0$ for $n \ge -1$ and $C1 = c$}.
\end{equation*}
The induced $\Vir$-module
\begin{equation*}
  \Vir^c = \Ind^\Vir_{\Vir^\ge \oplus \mathbb{C}L_{-1}}(\mathbb{C}) = U(\Vir) \otimes_{U(\Vir^\ge \oplus \mathbb{C}L_{-1})} \mathbb{C}
\end{equation*}
carries the structure of a vertex algebra and is called the \emph{universal Virasoro vertex algebra of central charge $c$}, see \cite[\S2]{frenkel_vertex_2001} or \cite[\S2]{callegaro_introduction_2017-1} for details.

The vertex algebra $\Vir^c$ has a unique maximal proper ideal.
We denote by $\Vir_c$ the unique simple quotient, and we call it the \emph{simple Virasoro vertex algebra of central charge $c$}.
Let $p, q \ge 2$ be relatively prime integers, and we set
\begin{equation*}
  c_{p, q} = 1 - \frac{6(p - q)^2}{pq}.
\end{equation*}

Let $V$ be a $\Vir$-module.
A vector $u$ in $V$ is called \emph{singular} if it is nonzero and
\begin{equation*}
  L_nu = 0 \quad \text{for $n \in \mathbb{Z}_+$}.
\end{equation*}

\begin{theorem}[{\cite{feigin_verma_1984} and \cite{gorelik_simplicity_2007}}]
  \label{thr:4}
  The following are equivalent:
  \begin{enumerate}
  \item $\Vir^c$ is not simple, i.e., $\Vir^c \neq \Vir_c$;
  \item $c$ is of the form $c_{p, q}$ for some $p, q \ge 2$ relatively prime integers.
  \end{enumerate}
  Moreover, the maximal proper ideal of $\Vir^{c_{p, q}}$ is generated by a singular vector of conformal weight $(p - 1)(q - 1)$, denoted by $a_{p, q}$.
  In the expression
  \begin{equation*}
    a_{p, q} = \sum_{\substack{\lambda \vdash (p - 1)(q - 1) \\ \lambda_{\len(\lambda)} \ge 2}}c_\lambda L_\lambda\vac,
  \end{equation*}
  where $c_\lambda \in \mathbb{Q}$, the coefficient of $L_{-2}^{(p - 1)(q - 1)/2}$ is nonzero.
\end{theorem}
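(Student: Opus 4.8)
\section*{Proof proposal}

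The plan is to separate the three assertions, taking the first two from the classical structure theory of Virasoro Verma modules and reserving a dedicated argument for the claim about the leading coefficient. For the equivalence of (i) and (ii), I would first observe that $\Vir^c$ is, as a $\Vir$-module, the quotient $M(c, 0)/\langle L_{-1}\vac\rangle$: the surjection $M(c, 0) \twoheadrightarrow \Vir^c$ sending $|c, 0\rangle \mapsto \vac$ has kernel generated by the singular vector $L_{-1}\vac$ (singular because $h = 0$). Hence $\Vir^c$ fails to be simple precisely when $\Vir^c$ carries a nonzero singular vector of positive conformal weight. By the Kac determinant formula, the contravariant form on the weight-$n$ subspace of $M(c, 0)$ degenerates beyond the $L_{-1}$-descendants for some $n$ if and only if $c = c_{p, q}$ with $p, q \ge 2$ relatively prime; the Feigin--Fuchs analysis of the submodule structure of $M(c_{p, q}, 0)$ then locates the relevant singular vector at conformal weight $(p - 1)(q - 1)$ and shows it is unique up to scalar. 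For the sharp statement that, \emph{as a vertex-algebra ideal}, the maximal proper ideal of $\Vir^{c_{p, q}}$ is generated by this single vector $a_{p, q}$, I would cite \cite{gorelik_simplicity_2007}.

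Next, for the displayed shape of $a_{p, q}$: since $L_{-1}\vac = 0$ in $\Vir^c$, every monomial occurring in $a_{p, q}$ has the form $L_{\lambda}\vac$ with $\lambda \vdash (p - 1)(q - 1)$ and all parts $\ge 2$. Rationality of the coefficients $c_{\lambda}$ follows because they solve the linear system $L_1 a_{p, q} = L_2 a_{p, q} = 0$ (the elements $L_1, L_2$ generate the Lie subalgebra $\bigoplus_{n \ge 1}\mathbb{C}L_n$, so these two equations already force singularity), whose matrix has entries in $\mathbb{Q}[c_{p, q}] = \mathbb{Q}$; as the solution space is one-dimensional, the $c_{\lambda}$ can be normalized to lie in $\mathbb{Q}$.

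The crux, and the step I expect to be the main obstacle, is the nonvanishing of the coefficient of $L_{-2}^{(p - 1)(q - 1)/2}$. Note first that $(p - 1)(q - 1)$ is even, since coprimality prevents $p$ and $q$ from both being even, so at least one of $p - 1, q - 1$ is even and the exponent is an integer. My proposed route passes through the Zhu $C_2$-algebra. Writing $\omega = L_{-2}\vac$ for the conformal vector and reducing modulo $C_2(\Vir^c)$, each factor $L_{-k}\vac$ with $k \ge 3$ is a $T$-derivative of $\omega$ and hence vanishes in $R_{\Vir^c}$, so any monomial $L_{\lambda}\vac$ with a part $\ge 3$ dies, while $L_{-2}^m\vac \mapsto \overline{\omega}^{\, m}$. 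Thus the image of $a_{p, q}$ in $R_{\Vir^c} \cong \mathbb{C}[\overline{\omega}]$ is exactly $c_{[2^{(p - 1)(q - 1)/2}]}\,\overline{\omega}^{\,(p - 1)(q - 1)/2}$.

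To conclude, I would invoke that $\Vir_{p, q}$ is $C_2$-cofinite with $\dim R_{\Vir_{p, q}} = (p - 1)(q - 1)/2$: since $R_{\Vir_{p, q}} = R_{\Vir^c}/(\text{image of the maximal ideal})$ and $R_{\Vir^c} = \mathbb{C}[\overline{\omega}]$, the defining relation must be $\overline{\omega}^{\,(p - 1)(q - 1)/2} = 0$ up to scalar. Were the coefficient zero, $a_{p, q}$ would map to $0$ in $R_{\Vir^c}$, leaving $R_{\Vir_{p, q}}$ without the relation needed to have the correct finite dimension, a contradiction; hence the coefficient is nonzero. The principal difficulty is to ensure this does not depend circularly on the $C_2$-algebra computations the paper itself later develops. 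An independent alternative, at the cost of heavier combinatorial bookkeeping, would be to substitute an explicit closed form for the singular vector of Benoit--Saint-Aubin type and read off the $L_{-2}^{(p - 1)(q - 1)/2}$ coefficient directly as a manifestly nonzero rational number.
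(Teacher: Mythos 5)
The paper offers no proof of this statement at all: \Cref{thr:4} is quoted wholesale from \cite{gorelik_simplicity_2007}, so your proposal has to be measured against the literature rather than an in-paper argument. Your handling of the equivalence (i) $\iff$ (ii), of the shape of $a_{p, q}$ (no parts equal to $1$, since $L_{-1}\vac = 0$ in $\Vir^c$), and of rationality of the $c_{\lambda}$ is fine, modulo the generation statement that you correctly outsource to \cite{gorelik_simplicity_2007}. The genuine gap is exactly where you suspect it: your argument for the nonvanishing of the coefficient $c_{[2^N]}$ of $L_{-2}^N$, where $N = (p - 1)(q - 1)/2$, is circular. The input you invoke, namely $R_{\Vir_{p, q}} \cong \mathbb{C}[L_{-2}]/(L_{-2}^N)$ and hence $\dim R_{\Vir_{p, q}} = N$, is derived in \Cref{exa:9} of this very paper \emph{from} the coefficient claim of \Cref{thr:4}, and the standard literature computations have the same dependence (Wang's determination of Zhu's algebra in \cite{wang_rationality_1993} rests on Feigin--Fuchs information about the singular vector). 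In fact what your reduction establishes is precisely an equivalence, not the theorem: the maximal ideal $I = U(\Vir)a_{p, q}$ satisfies $I_{(p - 1)(q - 1)} = \mathbb{C}a_{p, q}$, and since $\sigma_0$ is weight-homogeneous, the degree-$N$ graded piece of $\sigma_0(I) \subseteq \mathbb{C}[\overline{\omega}]$ is $\mathbb{C}\,c_{[2^N]}\overline{\omega}^N$; consequently $\dim R_{\Vir_{p, q}} = N$ holds if and only if $c_{[2^N]} \neq 0$. Without an independent proof of the dimension formula, nothing has been shown.

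Your proposed fallback does not rescue the general case either: closed-form expressions of Benoit--Saint-Aubin type \cite{benoit_degenerate_1988} exist only for singular vectors of types $(1, s)$ and $(r, 1)$, whereas the vacuum singular vector $a_{p, q}$, sitting at level $(p - 1)(q - 1)$ over the highest weight $(c_{p, q}, h_{1, 1}) = (c_{p, q}, 0)$, has type $(p - 1, q - 1)$, so a closed form is available only when $p = 2$ or $q = 2$ --- which is exactly why the present paper can afford explicit singular-vector arguments only in the boundary case, cf.\ \eqref{eq:5} and \Cref{lmm:10}. A smaller caveat: ``degeneration of the contravariant form beyond the $L_{-1}$-descendants'' needs care, since $h = 0 = h_{1, 1}$ is degenerate at level $1$ for \emph{every} $c$, and ruling out further singular vectors in $\Vir^c$ when $c$ is not of the form $c_{p, q}$ (e.g.\ $c = 1$) is part of what \cite{gorelik_simplicity_2007} actually proves. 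The honest conclusion is that the coefficient claim, like the rest of the theorem, should simply be cited from \cite{gorelik_simplicity_2007}, as the paper does; your $C_2$-algebra observation is still valuable, but as a proof that \Cref{thr:4} and the presentation of $R_{\Vir_{p, q}}$ in \Cref{exa:9} are equivalent statements, not as a proof of either.
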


\begin{proposition}
  \label{prp:3}
  Let $V$ be a vertex algebra, and let $Y^M: V \to \mathcal{F}(M)$ be a $V$-module.
  For $s, n_1, \dots, n_s \in \mathbb{N}$ and $a^1, \dots, a^s \in V$,
  \begin{equation*}
    Y^M(a^1_{(-n_1 - 1)}\dots a^s_{(-n_s - 1)}\vac, z) = \frac{:\partial^{n_1}_zY^M(a^1, z)\dots\partial^{n_s}_zY^M(a^s, z):}{n_1!\dots n_s!}.
  \end{equation*}
\end{proposition}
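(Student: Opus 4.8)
The plan is to induct on the number $s$ of fields, peeling off one creation operator from the left at each step; this matches the right-to-left reading convention $:a^1(z)\cdots a^s(z): = \, :a^1(z)(:a^2(z)\cdots a^s(z):):$ fixed just before \Cref{lmm:1}. The whole inductive step is the single generalized normal-ordering identity
\[
  Y^M(a_{(-n - 1)}b, z) = \frac{1}{n!}:\partial_z^n Y^M(a, z)\, Y^M(b, z): \qquad (a, b \in V,\ n \in \mathbb{N}).
\]
Granting this, I would write $b = a^2_{(-n_2 - 1)}\cdots a^s_{(-n_s - 1)}\vac$, apply the identity to split off $a^1$, and then invoke the inductive hypothesis for $Y^M(b, z)$; the nesting convention then reassembles the peeled factors into $:\partial_z^{n_1}Y^M(a^1, z)\cdots\partial_z^{n_s}Y^M(a^s, z):$ with overall constant $1/(n_1!\cdots n_s!)$. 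The base case is $s = 0$, where the product is just $\vac$ and the formula reduces to $Y^M(\vac, z) = \Id_M$.

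To set up the single step, I would first record two standard facts about the translation operator $T = L_{-1}$: the covariance $Y^M(L_{-1}a, z) = \partial_z Y^M(a, z)$ (and likewise $Y^V(L_{-1}c, z) = \partial_z Y^V(c, z)$ on $V$ itself), together with the vacuum normalizations $a_{(-1)}\vac = a$ and $Y^M(\vac, z) = \Id_M$. Reading off modes from $Y^V(L_{-1}c, z) = \partial_z Y^V(c, z)$ gives $(L_{-1}c)_{(k)} = -k\, c_{(k - 1)}$, and iterating this yields the operator identity $(L_{-1}^n a)_{(-1)} = n!\, a_{(-n - 1)}$ on $V$. Applying it to $b$ and using the iterated covariance $Y^M(L_{-1}^n a, z) = \partial_z^n Y^M(a, z)$, the displayed identity collapses to its $n = 0$ instance, namely the module normal-ordering formula $Y^M(a_{(-1)}b, z) = \, :Y^M(a, z)\, Y^M(b, z):$.

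The main obstacle is precisely this $n = 0$ formula: it is the only place where the genuine module axioms (associativity, or equivalently locality, of module vertex operators) are used, as opposed to mere bookkeeping with modes. Expanding into Fourier coefficients, its content at level $j$ is that $(a_{(-1)}b)_{(j)}$ acts on $c \in M$ as $\sum_{n \le -1}a_{(n)}b_{(j - n - 1)}c + \sum_{n \ge 0}b_{(j - n - 1)}a_{(n)}c$, which is exactly the mode expansion of $:Y^M(a, z)Y^M(b, z):$ furnished by \Cref{lmm:1}. Since this $(-1)$-product identity for modules is standard, I would cite it (see \cite{frenkel_vertex_2001, nozaradan_introduction_2008}) and keep the inductive reduction above as the substance of the argument; the remaining work—the reindexing of the two sums and the combinatorial matching—is routine.
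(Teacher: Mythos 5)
Your proposal is correct and takes essentially the same route as the paper: the paper's entire proof is a citation of the $n$-product identity for modules (\cite[(5.2.16)]{lepowsky_introduction_2004}), with the induction on $s$ left implicit. You merely unfold what that citation covers --- spelling out the induction on $s$ under the right-to-left nesting convention, and reducing the general-$n$ peeling step to the $(-1)$-product identity via translation covariance and $(L_{-1}^n a)_{(-1)} = n!\,a_{(-n-1)}$ --- while still citing the standard literature for the one genuinely module-theoretic input.
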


\begin{proof}
  This is a consequence of the $n$-product identity for modules over vertex algebras (\cite[(5.2.16)]{lepowsky_introduction_2004}).
\end{proof}

We are interested in the irreducible modules over the simple Virasoro vertex algebras $\Vir_{p, q} = \Vir_{c_{p, q}} = L(c_{p, q}, 0)$, where $p, q \ge 2$ are relatively prime integers.
In this context, the following constants play a crucial role.
For integers $m, n$ such that $0 < m < p$ and $0 < n < q$, we set
\begin{equation*}
  h_{m, n} = \frac{(np - mq)^2 - (p - q)^2}{4pq}.
\end{equation*}

\begin{theorem}[{\cite{wang_rationality_1993}}]
  \label{thr:5}
  We set $c = c_{p, q}$ for some $p, q \ge 2$ relatively prime integers.
  Then the irreducible modules over $\Vir_c$ are $L(c, h_{m, n})$ for integers $m, n$ such that $0 < m < p$ and $0 < n < q$.
  Let $Y^{L(c, h_{m, n})}_{\Vir^c}: \Vir^c \to \mathcal{F}(L(c, h_{m, n}))$ be the state-field correspondence of $L(c, h_{m, n})$ as a module over $\Vir^c$, and let $Y^{L(c, h_{m, n})}_{\Vir_c}: \Vir_c \to \mathcal{F}(L(c, h_{m, n}))$ be the state-field correspondence of $L(c, h_{m, n})$ as a module over $\Vir_c$.
  Then $Y^{L(c, h_{m, n})}_{\Vir^c}$ factors through $Y^{L(c, h_{m, n})}_{\Vir_c}$, i.e., the following diagram commutes
  \begin{equation*}
    \begin{tikzcd}
      \Vir^c \arrow[r, two heads] \arrow[rd, "{Y^{L(c, h_{m, n})}_{\Vir^c}}"'] & {\Vir_c} \arrow[d, "{Y^{L(c, h_{m, n})}_{\Vir_c}}"] \\
      & {\mathcal{F}(L(c, h_{m, n}))}
    \end{tikzcd}
  \end{equation*}
  where the horizontal arrow is the quotient map.
\end{theorem}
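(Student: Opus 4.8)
The plan is to reduce both assertions to a computation inside Zhu's associative algebra $A(V)$ of \cite{zhu_modular_1996}, which controls the $\mathbb{N}$-gradable representation theory of a vertex operator algebra: irreducible $\mathbb{N}$-gradable $V$-modules are in bijection with irreducible $A(V)$-modules, the correspondence sending a module to its lowest conformal-weight (``top'') space. First I would record that for the universal algebra one has $A(\Vir^c) \cong \mathbb{C}[x]$, with $x = [\omega]$ the class of the conformal vector $\omega = L_{-2}\vac$ acting on a highest weight vector $|c, h\rangle$ by the scalar $h$. This already shows that the irreducible $\Vir^c$-modules are exactly the $L(c, h)$, parametrized by the eigenvalue $h$ of $x$, so the classification over the simple quotient amounts to determining which $h$ survive.

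By \Cref{thr:4} the maximal proper ideal $I \subseteq \Vir^{c}$ is generated by the singular vector $a_{p, q}$ of conformal weight $N := (p - 1)(q - 1)$, whence $A(\Vir_c) = \mathbb{C}[x]/(G_{p, q}(x))$, where $G_{p, q}(x)$ is the image of $[a_{p, q}]$. Using \Cref{prp:3} to write $Y^{L(c, h)}(a_{p, q}, z)$ in terms of normal ordered products of $Y^{L(c, h)}(\omega, z)$ and extracting the weight-preserving mode $(a_{p, q})_{(N - 1)}$, one sees that this mode acts on the one-dimensional top space of $L(c, h)$ by the scalar $G_{p, q}(h)$, a polynomial in $h$; the nonvanishing of the coefficient of $L_{-2}^{(p - 1)(q - 1)/2}$ guaranteed by \Cref{thr:4} forces $\deg G_{p, q} = (p - 1)(q - 1)/2$. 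It then remains to identify the roots of $G_{p, q}$ with the numbers $h_{m, n}$, $0 < m < p$, $0 < n < q$. I would do this by the Kac determinant formula (see \cite{kac_bombay_2013}): for $h = h_{m, n}$ the Verma module $M(c_{p, q}, h)$ is degenerate in the precise way that makes $a_{p, q}$ act trivially on $L(c_{p, q}, h_{m, n})$, so each such $h_{m, n}$ is a root; conversely, the symmetry $h_{m, n} = h_{p - m, q - n}$ (which has no fixed point since $p, q$ are coprime) shows these values comprise exactly $(p - 1)(q - 1)/2$ distinct numbers, matching $\deg G_{p, q}$, so there are no others. Combined with the Zhu correspondence this yields the stated list of irreducible modules.

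For the factoring statement I would appeal to the ideal-sensitive form of the Zhu correspondence \cite{zhu_modular_1996}: an $\mathbb{N}$-gradable $\Vir^c$-module descends to $\Vir_c = \Vir^c/I$ precisely when $I$ acts by zero, and for an irreducible module this is detected on the top space, where the action of $I$ factors through $A(I) = (G_{p, q}(x)) \subseteq A(\Vir^c)$. By the previous step $A(I)$ acts on the top space $\mathbb{C}u_0$ of $L(c, h_{m, n})$ by $G_{p, q}(h_{m, n}) = 0$, so $I$ annihilates $L(c, h_{m, n})$; equivalently $Y^{L(c, h_{m, n})}_{\Vir^c}(v, z) = 0$ for every $v \in I = \ker(\Vir^c \twoheadrightarrow \Vir_c)$. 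The universal property of the quotient $\Vir^c \twoheadrightarrow \Vir_c$ then provides the unique factorization $Y^{L(c, h_{m, n})}_{\Vir_c}$ making the asserted triangle commute.

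The main obstacle is the identification of the roots of $G_{p, q}$ with the $h_{m, n}$: showing that each $L(c_{p, q}, h_{m, n})$ genuinely descends to the simple quotient, equivalently that $a_{p, q}$ acts by zero on it, is the substantive content of Wang's theorem \cite{wang_rationality_1993} and rests on the Feigin--Fuchs analysis of the embedding structure of degenerate Virasoro Verma modules encoded by the Kac determinant, rather than on any formal manipulation. The degree count via \Cref{thr:4} only guarantees the correct number of roots once this analytic input is in place.
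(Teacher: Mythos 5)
Theorem \ref{thr:5} is imported verbatim from \cite{wang_rationality_1993} and the paper supplies no proof of its own, so the relevant comparison is with Wang's argument — and your sketch reproduces exactly that route: the isomorphism $A(\Vir^c) \cong \mathbb{C}[x]$, the presentation $A(\Vir_c) \cong \mathbb{C}[x]/(G_{p, q}(x))$ coming from the singular vector $a_{p, q}$ of \Cref{thr:4}, the identification of the $(p - 1)(q - 1)/2$ roots of $G_{p, q}$ with the $h_{m, n}$ via the Kac determinant and Feigin--Fuchs structure theory, and the descent of each irreducible module through the quotient detected on its top space by Zhu's correspondence \cite{zhu_modular_1996}. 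Your proposal is correct (with the usual standard-but-unspoken points, such as the image of the maximal ideal in $A(\Vir^c)$ being the principal ideal $(G_{p, q})$, handled in Wang's paper), and you rightly flag that the genuinely substantive input is the Feigin--Fuchs analysis rather than the formal Zhu-algebra bookkeeping.
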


In this article, we consider:
\begin{align*}
  p &= 2, \\
  q &= 2s + 1 \quad \text{for $s \in \mathbb{Z}_+$}.
\end{align*}
By \zcref{thr:5}, $\Vir_{2, 2s + 1}$ has $s$ irreducible modules $L(c_{2, 2s + 1}, h_{1, 1}), \dots, L(c_{2, 2s + 1}, h_{1, s})$, known collectively as the \emph{boundary minimal models}.

\section{PBW bases and characters of the boundary minimal models}
\label{sec:pbw-bases-characters}

In this section, we fix $s \in \mathbb{Z}_+$ and $i \in \{1, \dots, s\}$.

A partition $\lambda = [\lambda_1, \dots, \lambda_m]$ \emph{contains a partition} $\eta = [\eta_1, \dots, \eta_n]$, written as $\eta \subseteq \lambda$, if $m \ge n$ and there is $j \in \mathbb{Z}_+$ such that $1 \le j \le m - n + 1$ and $[\lambda_j, \lambda_{j + 1}, \dots, \lambda_{j + n - 1}] = \eta$.

We define
\begin{equation*}
  p^{s, i}(t, q) = \sum_{\lambda \in P^{s, i}}t^{\len(\lambda)}q^{\Delta(\lambda)} \in \mathbb{N}[[t, q]],
\end{equation*}
where $P^{s, i}$ is the set of partitions that do not contain any partition in $R^{s, i}$ as defined in \zcref{sec:introduction}, i.e.,
\begin{equation*}
  P^{s, i} = \{\lambda \vdash \mid \text{for $\eta \in R^{s, i}$, $\lambda \nsupseteq \eta$}\}.
\end{equation*}

\begin{lemma}[{\cite[\S7]{andrews_theory_1998}}]
  \label{lmm:3}
  We have
  \begin{equation*}
    p^{s, i}(t, q) = \sum_{k = (k_1, \dots, k_{s - 1}) \in \mathbb{N}^{s - 1}}t^{kB^{(s)}_{s - 1}}\frac{q^{\frac{1}{2}kG^{(s)}k^\top + kB^{(s)}_{s - i}}}{(q)_{k_1}\dots(q)_{k_{s - 1}}}.
  \end{equation*}
\end{lemma}

\begin{lemma}[{\cite{feigin_verma_1984} and \cite[\S7]{andrews_theory_1998}}]
  \label{lmm:4}
  We have
  \begin{equation*}
    \ch_{L(c_{2, 2s + 1}, h_{1, i})}(q) = q^{h_{1, i}}p^{s, i}(1, q) = q^{h_{1, i}}\left(\prod^{\infty}_{\substack{n = 1 \\ n \not\equiv 0, \pm i \bmod 2s + 1}}(1 - q^n)^{-1}\right).
  \end{equation*}
\end{lemma}

We order the PBW basis of $U(\Vir^-) = \vspan\{L_{\lambda} \mid \lambda \vdash\}$ by degree reverse lexicographic order with $L_{-1} > L_{-2} > \dots$.
Formally, for any partitions $\lambda$ and $\eta$, we define
\begin{equation*}
  L_\lambda \le L_\eta\text{ if and only if }p_\lambda \le p_\eta.
\end{equation*}
For $x \in U(\Vir^-)$ with $x \neq 0$, we may write
\begin{equation*}
  x = c_1L_{\lambda^1} + c_2L_{\lambda^2} + \dots + c_rL_{\lambda^r},
\end{equation*}
where for $1 \le j \le r$, $0 \neq c_j \in \mathbb{C}$ and $L_{\lambda^1} > L_{\lambda^2} > \dots > L_{\lambda^r}$.
We define the \emph{leading power of $x$} as $\lp(x) = L_{\lambda^1}$.
We set $\lp(0) = 0$.
Next, for a highest weight $(c, h)$, we extend the definition of $\lp$ from $U(\Vir^-)$ to $M(c, h)$ by considering the isomorphism of vector spaces $U(\Vir^-) \xrightarrow{\sim} M(c, h), L_\lambda \mapsto L_\lambda|c, h\rangle$, where $\lambda$ is a partition.

\begin{remark}
  \label{rmk:1}
  The definition of the order in the PBW basis of $U(\Vir^-)$ was made so that for a highest weight $(c, h)$, a partition $\lambda$ and $u \in M(c, h)$, if $\lp(u) = L_\lambda|c, h\rangle$, then $\lp(\gamma^{\len(\lambda)}(u)) = p_\lambda$.
\end{remark}

\begin{lemma}
  \label{lmm:5}
  For $\lambda \in R^{s, i}$, there exists $u \in J(c_{2, 2s + 1}, h_{1, i})$ homogeneous such that $\lp(u) = L_\lambda|c_{2, 2s + 1}, h_{1, i}\rangle$.
\end{lemma}

\begin{proof}
  By \zcref{thr:4}, the maximal proper ideal of $\Vir^{c_{2, 2s + 1}}$ is generated by a singular vector of conformal weight $2s$, denoted by $a_{2, 2s + 1}$, which has the form
  \begin{equation}
    \label{eq:6}
    a_{2, 2s + 1} = L_{-2}^s\vac + \sum_{\substack{\mu \vdash 2s \\ \mu_{\len(\mu)} \ge 2 \\ \len(\mu) < s}}c_\mu L_\mu\vac,
  \end{equation}
  where $c_\mu \in \mathbb{Q}$.
  We have two cases:
  \begin{enumerate}
  \item $\lambda = [a^d, (a + 1)^{s - d}]$ for some $a \in \mathbb{Z}_+$ and $d \in \{1, \dots, s\}$.
    We set
    \begin{equation*}
      u = (a_{2, 2s + 1})_{(s - sa + d - 1)}|c_{2, 2s + 1}, h_{1, i}\rangle
    \end{equation*}
    and claim
    \begin{equation*}
      \lp(u) = L_\lambda|c_{2, 2s + 1}, h_{1, i}\rangle.
    \end{equation*}
    To prove this, we first prove that
    \begin{equation}
      \label{eq:7}
      (L_{-2}^s\vac)_{(s - sa + d - 1)}|c_{2, 2s + 1}, h_{1, i}\rangle = \binom{s}{d}L_\lambda|c_{2, 2s + 1}, h_{1, i}\rangle + \sum_{\substack{\mu \vdash \Delta(\lambda) \\ \len(\mu) \le s \\ \mu \neq \lambda}}c_\mu L_\mu|c_{2, 2s + 1}, h_{1, i}\rangle,
    \end{equation}
    where $c_\mu \in \mathbb{Q}$.
    Applying \zcref{lmm:2} and \zcref{prp:3} with:
    \begin{align*}
      a^1(z) = \dots = a^s(z) &= L(z) = \sum_{n \in \mathbb{Z}}L_{(n)}z^{-n - 1} = \sum_{n \in \mathbb{Z}}L_nz^{-n - 2}, \\
      b &= |c_{2, 2s + 1}, h_{1, i}\rangle, \\
      l &= s - sa + d - 1,
    \end{align*}
    we obtain
    \begin{equation}
      \label{eq:8}
      (L_{-2}^s\vac)_{(l)}|c_{2, 2s + 1}, h_{1, i}\rangle = \sum_{n_1, \dots, n_{s - 1} \in \mathbb{N}}\sum_{k = 0}^{s - 1}R^{l, k}_{n_1, \dots, n_{s - 1}}(L(z), \dots, L(z))|c_{2, 2s + 1}, h_{1, i}\rangle,
    \end{equation}
    where $R^{l, k}_{n_1, \dots, n_{s - 1}}(L(z), \dots, L(z))$ is the sum of $\binom{s - 1}{k}$ terms given by
    \begin{align*}
      &R^{l, k}_{n_1, \dots, n_{s - 1}}(L(z), \dots, L(z)) = \\
      &\sum_{\substack{1 \le i_1 < \dots < i_k \le s - 1 \\ 1 \le j_1 < \dots < j_{s - 1 - k} \le s - 1 \\ \{i_1, \dots, i_k\} \cup \{j_1, \dots, j_{s - 1 - k}\} = \{1, \dots, s - 1\}}}L_{(-n_{j_1} - 1)}\dots L_{(-n_{j_{s - 1 - k}} - 1)}L_{(l - k - \sum_{r = 1}^kn_{i_r} + \sum_{r = 1}^{s - 1 - k}n_{j_r})}L_{(n_{i_k})}\dots L_{(n_{i_1})},
    \end{align*}
    where
    \begin{align*}
      &L_{(-n_{j_1} - 1)}\dots L_{(-n_{j_{s - 1 - k}} - 1)}L_{(l - k - \sum_{r = 1}^kn_{i_r} + \sum_{r = 1}^{s - 1 - k}n_{j_r})}L_{(n_{i_k})}\dots L_{(n_{i_1})} = \\
      &L_{-n_{j_1} - 2}\dots L_{-n_{j_{s - 1 - k}} - 2}L_{s - sa + d - 2 - k - \sum_{r = 1}^kn_{i_r} + \sum_{r = 1}^{s - 1 - k}n_{j_r}}L_{n_{i_k} - 1}\dots L_{n_{i_1} - 1}.
    \end{align*}
    We see that each term $L_\mu$ appearing in $R^{l, k}_{n_1, \dots, n_{s - 1}}(L(z), \dots, L(z))$, where $\mu$ is a composition (not necessarily a partition), satisfies $\Delta(\mu) = sa + s - d = \Delta(\lambda)$ and $\len(\mu) = s = \len(\lambda)$.

    We recall that $L_\eta - L_{\eta\sigma} \in U(\Vir^-)^{\len(\eta) - 1}$ for a composition $\eta$ and a permutation $\sigma \in S_{\len(\eta)}$ (see \cite[Lemma 2.1.5]{dixmier_enveloping_1996} for details).
    Therefore, for a composition $\eta$, we can expand $L_\eta$ in the following way
    \begin{equation*}
      L_\eta = L_{\prt(\eta)} + \sum_{\substack{\mu \vdash \Delta(\eta) \\ \len(\mu) < \len(\eta)}}c_\mu L_\mu,
    \end{equation*}
    where $c_\mu \in \mathbb{Q}$.

    We need to compute the coefficient of $L_\lambda|c_{2, 2s + 1}, h_{1, i}\rangle$ in $u$ when expressed as a linear combination of the elements of the usual PBW basis.
    To do this, we need to see how much each sum $R^{l, k}_{n_1, \dots, n_{s - 1}}(L(z), \dots, L(z))|c_{2, 2s + 1}, h_{1, i}\rangle$ contributes to the coefficient of $L_\lambda|c_{2, 2s + 1}, h_{1, i}\rangle$ in $u$.
    We have two subcases:
    \begin{description}[leftmargin=!]
    \item[Subcase $a = 1$] In the sum \eqref{eq:8}, if $k > d$ or $k < d - 1$ or $n_j \neq 0$ for some $j = 1, \dots, s - 1$, then $R^{l, k}_{n_1, \dots, n_{s - 1}}(L(z), \dots, L(z))|c_{2, 2s + 1}, h_{1, i}\rangle$ contributes $0$.
      As for the two remaining sums, the sum $R^{l, d - 1}_{0, \dots, 0}(L(z), \dots, L(z))|c_{2, 2s + 1}, h_{1, i}\rangle$ contributes $\binom{s - 1}{d - 1}$ because we are picking $d - 1$ elements from $\{1, \dots, s - 1\}$, and the sum $R^{l, d}_{0, \dots, 0}(L(z), \dots, L(z))|c_{2, 2s + 1}, h_{1, i}\rangle$ contributes $\binom{s - 1}{d}$ because we are picking $d$ elements from $\{1, \dots, s - 1\}$.
      Since $\binom{s - 1}{d - 1} + \binom{s - 1}{d} = \binom{s}{d}$, we obtain \eqref{eq:7}.
    \item[Subcase $a > 1$] In the sum \eqref{eq:8}, if $k > 0$, then $R^{l, k}_{n_1, \dots, n_{s - 1}}(L(z), \dots, L(z))|c_{2, 2s + 1}, h_{1, i}\rangle$ contributes $0$.
      We have
      \begin{equation*}
        R^{l, 0}_{n_1, \dots, n_{s - 1}}(L(z), \dots, L(z)) = L_{-n_1 - 2}\dots L_{-n_{s - 1} - 2}L_{s - sa + d - 2 + \sum_{r = 1}^{s - 1}n_r}.
      \end{equation*}
      Therefore, the sum $R^{l, 0}_{n_1, \dots, n_{s - 1}}(L(z), \dots, L(z))|c_{2, 2s + 1}, h_{1, i}\rangle$ contributes $\binom{s - 1}{d - 1}$ when exactly $d - 1$ of the $n_j$'s are equal to $a - 2$ and $s - d$ of the $n_j$'s are equal to $a - 1$, contributes $\binom{s - 1}{d}$ when exactly $d$ of the $n_j$'s are equal to $a - 2$ and $s - d - 1$ of the $n_j$'s are equal to $a - 1$ and contributes $0$ otherwise.
      Since $\binom{s - 1}{d - 1} + \binom{s - 1}{d} = \binom{s}{d}$, we obtain \eqref{eq:7}.
    \end{description}
    We note that if $\mu$ is a partition of $\Delta(\lambda)$ such that $\len(\mu) \le s$ and $\mu \neq \lambda$, then $L_\lambda > L_\mu$.
    Thus, \eqref{eq:7} implies $\lp((L_{-2}^s\vac)_{(s - sa + d - 1)}|c_{2, 2s + 1}, h_{1, i}\rangle) = L_\lambda|c_{2, 2s + 1}, h_{1, i}\rangle$.
    From \eqref{eq:6} and \zcref{lmm:2}, we see that $\lp(u) = \lp((L_{-2}^s\vac)_{(s - sa + d - 1)}|c_{2, 2s + 1}, h_{1, i}\rangle) = L_\lambda|c_{2, 2s + 1}, h_{1, i}\rangle$.
    Finally, by \zcref{thr:5}, $u \in J(c_{2, 2s + 1}, h_{1, i})$, and it is clear that $u$ is homogeneous from its definition.
  \item $\lambda = [1^i]$.
    This follows from \cite{benoit_degenerate_1988}, where it is proved that one of the generating singular vectors in $J(c_{2, 2s + 1}, h_{1, i})$ contains the term $L_{-1}^i|c_{2, 2s + 1}, h_{1, i}\rangle$ in its expansion with respect to the usual PBW basis. \qedhere
  \end{enumerate}
\end{proof}

\begin{remark}
  \label{rmk:2}
  For any partitions $\lambda$ and $\eta$, if $\lambda \supseteq \eta$, then $p_\eta \mid p_\lambda$.
  The converse is not true.
  For example, $p_{[4, 2]} \mid p_{[4, 3, 2]}$, but $[4, 3, 2] \nsupseteq [4, 2]$.
  However, if $\eta = [\eta_1, \dots, \eta_m]$ and $\eta_1 - \eta_m \le 1$, then $\lambda \supseteq \eta$ if and only if $p_\eta \mid p_\lambda$.
\end{remark}

\begin{lemma}
  \label{lmm:6}
  There is an alternative description for $P^{s, i}$, namely
  \begin{equation*}
    P^{s, i} = \{\lambda \vdash \mid \text{for $\eta \in R^{s, i}$, $p_\eta \nmid p_\lambda$}\}.
  \end{equation*}
\end{lemma}

\begin{proof}
  This is a consequence of \zcref{rmk:2}.
\end{proof}

\begin{proof}[Sketch of proof of \zcref{thr:2}]
  As explained in (the proof of) \cite[Theorem 0.4]{salazar_pbw_2024}, we can only use Gröbner basis theory on the free module $\bigoplus_{n = 1}^N\mathbb{C}[L_{-2}, L_{-3}, \dots, L_{-N}]L_{-1}^n$ for $N \in \mathbb{N}$, and then we let $N \to \infty$ to conclude.
  However, here we limit ourselves to a brief informal sketch to see how the results obtained so far lead to a proof of \zcref{thr:2}.

  Let $G$ be a Gröbner basis of $K(c_{2, 2s + 1}, h_{1, i})$, and we set:
  \begin{align*}
    B &= \{p_\lambda \mid \text{for $a \in G$, $\lp(a) \nmid p_\lambda$}\}, \\
    \overline{B} &= \{p_\lambda \mid \lambda \in P^{s, i}\}.
  \end{align*}
  By \eqref{eq:5}, \zcref{rmk:1}, and \zcref{lmm:5}, we have $\overline{B} \subseteq B$.
  Actually, \zcref{prp:1}, \zcref{lmm:6} and \zcref{lmm:4} imply $\overline{B} = B$.
  We have found a basis of $\gr(L(c_{2, 2s + 1}, h_{1, i}))$ and by \zcref{prp:1}, a basis of $L(c_{2, 2s + 1}, h_{1, i})$.
\end{proof}

\begin{proof}[Proof of \zcref{thr:1}]
  This is a corollary of (the proof of) \zcref{thr:2}.
\end{proof}

\section{The Li filtration}
\label{sec:li-filtr}

Let $V$ be a vertex algebra, and let $(a^i)_{i \in I}$ be a family of strong generators of $V$.
For $p \in \mathbb{Z}$, we set
\begin{equation*}
  F_pV = \vspan\{a^{i_1}_{(-n_1 - 1)}\dots a^{i_s}_{(-n_s - 1)}\vac \mid s, n_1, \dots, n_s \in \mathbb{N}, i_1, \dots, i_s \in I, n_1 + \dots + n_s \ge p\}.
\end{equation*}

\begin{proposition}[{\cite{li_abelianizing_2005}}]
  \label{prp:4}
  The filtration $(F_pV)_{p \in \mathbb{Z}}$ satisfies:
  \begin{enumerate}
  \item $F_pV = V$ for $p \le 0$;
  \item $\vac \in F_0V \supseteq F_1V \supseteq \dots$;
  \item $T(F_pV) \subseteq F_{p + 1}V$ for $p \in \mathbb{Z}$;
  \item $a_{(n)}F_qV \subseteq F_{p + q - n - 1}V$ for $p, q \in \mathbb{Z}$, $a \in F_pV$ and $n \in \mathbb{Z}$;
  \item $a_{(n)}F_qV \subseteq F_{p + q - n}V$ for $p, q \in \mathbb{Z}$, $a \in F_pV$ and $n \in \mathbb{N}$.
  \end{enumerate}
\end{proposition}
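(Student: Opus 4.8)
The plan is to split the five properties into two groups. Properties (i)--(iii) follow directly from the definition of $F_pV$ together with the elementary identities $a_{(n)}\vac = 0$ for $n \ge 0$, $a_{(-n-1)}\vac = \frac{1}{n!}T^na$ for $n \ge 0$, and $[T, a_{(n)}] = -na_{(n-1)}$; properties (iv)--(v) carry the real content and are proved together by induction using the Borcherds identities.

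For (i), every spanning monomial $a^{i_1}_{(-n_1-1)}\dots a^{i_s}_{(-n_s-1)}\vac$ has $n_1 + \dots + n_s \ge 0 \ge p$ whenever $p \le 0$, so $F_pV$ already contains all strong-generation monomials and hence equals $V$. For (ii), the vacuum is the empty product, whose (empty) sum of exponents is $0$, so $\vac \in F_0V$; and raising the lower bound $p$ only shrinks the spanning set, giving the chain of inclusions. For (iii), I would apply $T$ to a spanning monomial: since $T\vac = 0$ and $[T, a^{i_j}_{(-n_j-1)}] = (n_j+1)a^{i_j}_{(-n_j-2)}$, the Leibniz rule yields a sum of monomials in each of which a single exponent $n_j$ is increased by one, so the total exponent sum rises from $\ge p$ to $\ge p+1$, whence $T(F_pV) \subseteq F_{p+1}V$.

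For (iv) and (v) the tools are the commutator formula $[a_{(m)}, b_{(n)}] = \sum_{j \ge 0}\binom{m}{j}(a_{(j)}b)_{(m+n-j)}$ and the associativity (iterate) formula expressing $(a_{(k)}b)_{(n)}$ through products $a_{(\cdot)}b_{(\cdot)}$ and $b_{(\cdot)}a_{(\cdot)}$, together with two easy building blocks: prepending a negative mode of a generator, $a^i_{(-r-1)}F_tV \subseteq F_{t+r}V$ for $r \ge 0$, holds immediately from the definition (it merely augments the exponent sum by $r$); and $a_{(n)}\vac = 0$ for $n \ge 0$, so annihilation modes terminate on the vacuum. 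As both claims are linear in $a$ and $v$, I reduce to $a$, $v$ spanning monomials. Writing a general $a \in F_pV$ as $a^{i_1}_{(-n_1-1)}a'$ with $a' \in F_{p-n_1}V$ of shorter length, the associativity formula rewrites $a_{(n)}v$ as a combination of terms $a^{i_1}_{(-n_1-1-j)}(a'_{(n+j)}v)$ and $a'_{(k)}(a^{i_1}_{(j)}v)$ with $j \ge 0$; the first is controlled by the shorter $a'$ followed by a negative generator-mode, the second by a nonnegative generator-mode followed by the shorter $a'$. This reduces everything to the generator case, which I would settle by pushing the mode $a^i_{(n)}$ to the right through $v = a^j_{(-m-1)}w$ via the commutator formula: the transported term lands in the correct space by the inductive hypothesis on $w$, while the commutator term introduces the structure constants $a^i_{(l)}a^j$, and a bookkeeping check of the mode indices shows the filtration shifts telescope exactly to $p+q-n$ (for $n \ge 0$) and to $p+q-n-1$ (in general).

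The main obstacle is organizing this as a genuinely well-founded induction. Both the commutator and the associativity formulas reintroduce composite elements $a^i_{(l)}a^j$ whose PBW length is not a priori smaller, and the normal-ordering of modes can enlarge the length of the module element on which one acts, so a naive induction on the length of $v$ alone is circular. The remedy I would adopt is to induct on the conformal weight of the element produced (equivalently on $\mathrm{wt}(a) + \mathrm{wt}(v)$): the crucial point is that the products $a^i_{(l)}a^j$ arising from the brackets have strictly smaller conformal weight $\mathrm{wt}(a^i) + \mathrm{wt}(a^j) - l - 1$, so every term appearing in the reductions is strictly simpler in this well-ordered measure, while the base cases $v = \vac$ and $a = \vac$ are immediate from (iii) and from $a_{(n)}\vac = 0$, $a_{(-n-1)}\vac = \frac{1}{n!}T^na$. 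The remaining labor is the routine but lengthy verification that, for each of the finitely many families of terms produced by the two Borcherds identities, the accumulated filtration shift is exactly the asserted one.
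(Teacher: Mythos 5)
Your handling of (i)--(iii) is correct, and your reduction skeleton for (iv)--(v) --- the iterate formula to peel off the leftmost mode of $a$, the commutator formula $[a_{(m)}, b_{(n)}] = \sum_{j \ge 0}\binom{m}{j}(a_{(j)}b)_{(m+n-j)}$ to push a nonnegative mode through $v$, the definitional prepending $a^i_{(-r-1)}F_tV \subseteq F_{t+r}V$, and the vacuum base cases --- is exactly the right machinery; note that the paper offers no proof of \Cref{prp:4} at all but cites Li, so the comparison is with Li's argument. You also correctly diagnosed the circularity danger, but your fix is where the genuine gap lies: \Cref{prp:4} concerns an \emph{arbitrary} vertex algebra $V$ with an arbitrary family of strong generators, with no conformal vector, Hamiltonian, or grading assumed, so the ``conformal weight'' of $a$ and $v$ is simply undefined and your termination measure does not exist. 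Even granting a graded $V$, the measure $\mathrm{wt}(a) + \mathrm{wt}(v)$ is well-founded only when the weights involved are well-ordered (say, bounded below with discrete spectrum, as for $\mathbb{N}$-graded algebras and $h + \mathbb{N}$-graded modules); a $\mathbb{Z}$-grading unbounded below defeats the induction. Your argument therefore proves the proposition only for the special cases the paper later needs ($\Vir^c$, $\Vir_{p,q}$ and their modules), not as stated, and Li's result carries no such hypothesis.

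The circularity can be broken with no grading at all, and this is essentially Li's device: enlarge the generating family to all of $V$ (the filtration is unchanged by \Cref{lmm:3}, which is precisely the statement that it is independent of the choice of strong generators). Then the troublesome by-products $a^i_{(l)}a^j$ of the commutator formula are again \emph{single} generators, so the single-element case --- $c_{(n)}F_qV \subseteq F_{q-n}V$ for $c \in V$, $n \ge 0$, negative modes being definitional prepends --- closes under a plain induction on the length of the monomial presenting $v$: the commutator term $(c_{(l)}b)_{(k)}w$ is either a nonnegative single mode on the strictly shorter $w$ (inductive hypothesis) or a prepended negative mode (definition). With that in hand, your secondary induction on the length of the presentation of $a$ goes through provided you quantify it over \emph{arbitrary} $v \in F_qV$ rather than over monomials of bounded length --- this matters because the intermediate element $a^{i_1}_{(j)}v$ produced by the iterate formula is an arbitrary element of $F_{q-j}V$ with no length control, which is exactly the point at which a naive induction on $\len(v)$ alone (or any lexicographic combination of the two lengths) collapses. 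Organized this way, the proof needs no auxiliary measure and yields the proposition in the stated generality.
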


Let
\begin{equation*}
  \gr_F(V) = \bigoplus_{p \in \mathbb{N}}F_pV/F_{p + 1}V
\end{equation*}
be the associated graded vector space.
By \cite{li_abelianizing_2005}, the vector space $\gr_F(V)$ is a vertex Poisson algebra with operations given as follows.
For $p, q \in \mathbb{N}$, $a \in F_pV$ and $b \in F_qV$, we set:
\begin{align*}
  \sigma_p(a)\sigma_q(b) &= \sigma_{p + q}(a_{(-1)}b), \\
  T(\sigma_p(a)) &= \sigma_{p + 1}(Ta), \\
  Y_-(\sigma_p(a), z)\sigma_q(b) &= \sum_{n \in \mathbb{N}}\sigma_{p + q - n}(a_{(n)}b)z^{-n - 1},
\end{align*}
where $\sigma_p: F_pV \to \gr_F(V)$ is the \emph{principal symbol map}, which is the composition of the natural maps $F_pV \twoheadrightarrow F_pV/F_{p + 1}V$ and $F_pV/F_{p + 1}V \hookrightarrow \gr_F(V)$.
The unit is $\sigma_0(\vac)$.
The filtration $(F_pV)_{p \in \mathbb{Z}}$ is called the \emph{Li filtration of $V$}.

\begin{example}[$\gr_F(\Vir^c)$]
  \label{exa:4}
  We pick $c \in \mathbb{C}$.
  We have an isomorphism
  \begin{align*}
    \gr_F(\Vir^c) &\xrightarrow{\sim} \mathbb{C}[L_{-2}, L_{-3}, \dots], \\
    \sigma_{\Delta(\lambda) - \clen(\lambda)}(L_\lambda\vac) &\mapsto p_\lambda \quad \text{for $\lambda$ a composition without ones}.
  \end{align*}
  The derivation $T \in \Der(\mathbb{C}[L_{-2}, L_{-3}, \dots])$ is given by $T(L_{-n}) = (n - 1)L_{-n - 1}$ for $n \ge 2$, which is extended to a derivation.
  The Poisson structure of $\gr_F(\Vir^c)$ is trivial (i.e., the map $Y_-$ is zero).
\end{example}

Let $V$ be a vertex algebra, let $(a^i)_{i \in I}$ be a family of strong generators of $V$, and let $M$ be a $V$-module.
For $p \in \mathbb{Z}$, we set
\begin{equation*}
  F_pM = \vspan\{a^{i_1M}_{(-n_1 - 1)}\dots a^{i_sM}_{(-n_s - 1)}u \mid s, n_1, \dots, n_s \in \mathbb{N}, i_1, \dots, i_s \in I, u \in M, n_1 + \dots + n_s \ge p\}.
\end{equation*}

\begin{proposition}[{\cite{li_abelianizing_2005}}]
  \label{prp:5}
  The filtration $(F_pM)_{p \in \mathbb{Z}}$ satisfies:
  \begin{enumerate}
  \item $M = F_pM$ for $p \le 0$;
  \item $F_0M \supseteq F_1M \supseteq \dots$;
  \item $a_{(n)}F_qM \subseteq F_{p + q - n - 1}M$ for $p, q \in \mathbb{Z}$, $a \in F_pV$ and $n \in \mathbb{Z}$;
  \item $a_{(n)}F_qM \subseteq F_{p + q - n}M$ for $p, q \in \mathbb{Z}$, $a \in F_pV$ and $n \in \mathbb{N}$.
  \end{enumerate}
\end{proposition}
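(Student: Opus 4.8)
Properties (i) and (ii) are immediate from the definition of the filtration, and I would dispose of them first. Taking $s = 0$ in the spanning set shows that every $u \in M$ already lies in $F_0M$ (the empty sum satisfies $0 \ge p$ for $p \le 0$), so $M = F_pM$ for $p \le 0$, which is (i); and since the constraint $n_1 + \dots + n_s \ge p + 1$ is stronger than $n_1 + \dots + n_s \ge p$, the spanning set of $F_{p+1}M$ sits inside that of $F_pM$, which is (ii). For (iii) and (iv) the plan is to mirror the proof of the vertex-algebra statement \Cref{prp:4}, with the Borcherds identity of $V$ replaced by its module counterpart. Because the filtration is decreasing, (iv) already implies (iii) for $n \ge 0$, so it suffices to establish (iv) for $n \ge 0$ together with (iii) for $n < 0$. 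I will use throughout the elementary \emph{creation bound} $a^i_{(-m-1)}F_rM \subseteq F_{r+m}M$ for a strong generator $a^i$ and $m \ge 0$, which is immediate from the definition since prepending the creation mode $a^i_{(-m-1)}$ to a spanning monomial of $F_rM$ raises its mode-sum by $m$. I would then reduce the element of $F_qM$ to a standard monomial $w = a^{j_1}_{(-m_1-1)}\cdots a^{j_t}_{(-m_t-1)}u$ with $m_1 + \dots + m_t \ge q$ and $u \in M$, and argue by induction on the length $t$.

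The base case $t = 0$ is the action on $u \in M = F_0M$: for every $a \in F_pV$ and every $n$, that $a_{(n)}u \in F_{p-n-1}M$, and that $a_{(n)}u \in F_{p-n}M$ when $n \ge 0$. By linearity it suffices to take $a = a^{i_1}_{(-k_1-1)}\cdots a^{i_r}_{(-k_r-1)}\vac$ a spanning monomial of $F_pV$, so that $k_1 + \dots + k_r \ge p$, and I would induct on $r$. For $r = 1$, \Cref{prp:3} shows that $a_{(n)}$ is a scalar multiple of the generator mode $a^{i_1}_{(n-k_1)}$, and the claim follows by inspecting whether $n - k_1$ is negative (a creation mode, landing in $F_{k_1-n-1}M$) or nonnegative (landing in $M = F_{\le 0}M$). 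For $r \ge 2$ I would write $a = a^{i_1}_{(-k_1-1)}b$ with $b \in F_{p-k_1}V$ of shorter length and expand $(a^{i_1}_{(-k_1-1)}b)_{(n)}$ via the module iterate (associativity) identity; each resulting term applies a single generator mode and a mode of $b$ to $u$, which are controlled by the creation bound and the inductive hypothesis for $b$, and a check of the index shifts places every summand in the required piece. Alternatively one can read the base case off the closed-form expansion of \Cref{lmm:2}.

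With the base case in hand, I would carry out the induction on $t$. Writing $w = a^{j_1}_{(-m_1-1)}w'$ with $w' \in F_{q-m_1}M$ of length $t-1$, I would commute $a_{(n)}$ past the leading creation mode using the module commutator identity $[a_{(n)}, a^{j_1}_{(-m_1-1)}] = \sum_{s \ge 0}\binom{n}{s}(a_{(s)}a^{j_1})_{(n-m_1-1-s)}$, a standard consequence of the $n$-product identity used in \Cref{prp:3}. The straightened term $a^{j_1}_{(-m_1-1)}\bigl(a_{(n)}w'\bigr)$ is handled by the inductive hypothesis for $w'$ followed by the creation bound. For each commutator term I would invoke property (v) of \Cref{prp:4} to get $a_{(s)}a^{j_1} \in F_{p-s}V$ (as $a \in F_pV$, $a^{j_1} \in F_0V$ and $s \ge 0$) and then apply the inductive hypothesis for $w'$; in every sign case for the shifted index $n - m_1 - 1 - s$ the bookkeeping yields membership in $F_{p+q-n}M$, and in $F_{p+q-n-1}M$ when $n < 0$, closing the induction.

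The main obstacle is organizing the nested inductions so that the recursion terminates: both Borcherds identities feed the module action of new, more complicated states back into the argument — the products $a_{(s)}a^{j_1}$ in the commutator identity and the mode of $b$ in the iterate identity — and the inductions on the length $t$ of the monomials spanning $F_qM$ and on the length $r$ of the monomial defining $a$ are chosen precisely so that each such term is applied to a strictly simpler object, keeping the recursion well-founded. The vertex-algebra bound in property (v) of \Cref{prp:4} is exactly what controls the filtration level of $a_{(s)}a^{j_1}$, and the remaining effort is the routine but fiddly verification that the various index shifts combine to produce the sharp bounds $F_{p+q-n}M$ and $F_{p+q-n-1}M$ in all cases.
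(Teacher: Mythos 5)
The paper gives no proof of this proposition at all --- it is stated with a citation to Li's original article --- so the only meaningful comparison is with Li's argument, and your reconstruction follows exactly that route: nested inductions driven by the module iterate and commutator identities. The overall structure checks out: the reduction of (iii) and (iv) to proving (iv) for $n \ge 0$ together with (iii) for $n < 0$, the creation bound, the well-founded organization of the inductions on $t$ and $r$, and the index bookkeeping in the inductive step are all correct --- for instance, in the commutator terms, applying the length-$(t-1)$ hypothesis in its form (iii) to $a_{(s)}a^{j_1} \in F_{p-s}V$ with shifted index $n - m_1 - 1 - s$ lands you in $F_{(p-s)+(q-m_1)-(n-m_1-1-s)-1}M = F_{p+q-n}M$, which is even the sharp bound (iv), and the straightened term is handled by the hypothesis plus the creation bound as you say.

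One spot in your base case is too quick, though, and as written it fails in an edge case. For $r = 1$, i.e.\ $a = a^{i_1}_{(-k_1-1)}\vac \in F_pV$ with $k_1 \ge p$, you claim the bounds follow by inspecting the sign of $n - k_1$: for $n - k_1 < 0$ the creation bound puts $a^{i_1}_{(n-k_1)}u$ in $F_{k_1-n-1}M$. That suffices for (iii), but for (iv) with $k_1 = p$ and $0 \le n \le k_1 - 1$ it only gives $F_{p-n-1}M$, one step short of the required $F_{p-n}M$, and no filtration containment closes this gap. What rescues the statement is a fact your inspection omits: by \Cref{prp:3}, $(a^{i_1}_{(-k_1-1)}\vac)_{(n)} = \binom{k_1-n-1}{k_1}\,a^{i_1}_{(n-k_1)}$, and the binomial coefficient vanishes identically for $0 \le n \le k_1 - 1$, so these modes are zero. (Your other branch is fine precisely because a nonvanishing scalar with $n \ge 0$ forces $n \ge k_1 \ge p$, whence $F_{p-n}M = M$.) Equivalently, you can avoid the explicit scalar by starting your $r$-induction at $r = 0$, where $a = \vac$ and $\vac_{(m)} = \delta_{m,-1}\Id$, and running $r = 1$ through the same iterate identity: both batches of terms are then empty in exactly the range $0 \le n \le k_1 - 1$. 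With that one-line patch your argument is complete; this is a fixable oversight in the sketch, not a flaw in the approach.
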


Let
\begin{equation*}
  \gr_F(M) = \bigoplus_{p \in \mathbb{N}}F_pM/F_{p + 1}M
\end{equation*}
be the associated graded vector space.
By \cite{li_abelianizing_2005}, the vector space $\gr_F(M)$ is a module over $\gr_F(V)$ with operations given as follows.
For $p, q \in \mathbb{N}$, $a \in F_pV$ and $u \in F_qM$, we set:
\begin{align*}
  \sigma_p(a)\sigma^M_q(u) &= \sigma^M_{p + q}(a^M_{(-1)}u), \\
  Y^M_-(\sigma_p(a), z)\sigma^M_q(u) &= \sum_{n \in \mathbb{N}}\sigma^M_{p + q - n}(a^M_{(n)}u)z^{-n - 1},
\end{align*}
where $\sigma^M_p: F_pM \to \gr_F(M)$ is the principal symbol map.
The filtration $(F_pM)_{p \in \mathbb{Z}}$ is called the Li filtration of $M$.

If $V$ is graded with Hamiltonian $H$, and $M$ is graded with Hamiltonian $H^M$, then $H^M(F_pM) \subseteq F_pM$ for $p \in \mathbb{Z}$.
Therefore, we can define an operator $H^M \in \End(\gr_F(M))$ as $H^M(\sigma^M_p(u)) = \sigma^M_p(H^Mu)$ for $p \in \mathbb{N}$ and $u \in F_pM$.
For $p \in \mathbb{Z}$ and $\Delta \in \mathbb{C}$, we define $F_pM_\Delta = F_pM \cap M_\Delta$.
Since $H^M(F_pM) \subseteq F_pM$ for $p \in \mathbb{Z}$, by \cite[Corollary 1.1]{kac_bombay_2013}, we have
\begin{equation*}
  F_pM = \bigoplus_{\Delta \in \mathbb{C}}F_pM_\Delta \quad \text{for $p \in \mathbb{Z}$}.
\end{equation*}
For $\Delta \in \mathbb{C}$, we define $\gr_F(M)_\Delta = \bigoplus_{p \in \mathbb{N}}\sigma^M_p(F_pM_\Delta)$.
Then $H^Mu = \Delta u$ for $u \in \gr_F(M)_\Delta$.
The family of subspaces $(\gr_F(M)_\Delta)_{\Delta \in \mathbb{C}}$ satisfies $\gr_F(M) = \bigoplus_{\Delta \in \mathbb{C}}\gr_F(M)_\Delta$.
Therefore, the operator $H^M \in \End(\gr_F(M))$ is diagonalizable with $\gr_F(M)_\Delta = \ker(H^M - \Delta\Id_{\gr_F(M)})$.
In fact, it is possible to prove that this diagonalizable operator $H^M$ is actually a Hamiltonian of $\gr_F(M)$.

The following proposition is similar to \zcref{prp:1}, but it requires an $h + \mathbb{N}$-graded $V$-module $M$ and deals with the decreasing filtration $(F_pM)_{p \in \mathbb{Z}}$.

\begin{proposition}
  \label{prp:6}
  Let $V$ be an $\mathbb{N}$-graded vertex algebra, let $M$ be an $h + \mathbb{N}$-graded $V$-module, and let $(B_p)_{p \in \mathbb{N}}$ be a decreasing family of sets such that $B_p \subseteq F_pM$, and all elements of $B_p$ are homogeneous for $p \in \mathbb{N}$.
  Then $\{v + F_{p + 1}M \mid v \in B_p \setminus B_{p + 1}\}$ spans (resp.\ is a basis of) $F_pM/F_{p + 1}M$ for $p \in \mathbb{N}$ if and only if $B_p$ spans (resp.\ is a basis of) $F_pM$ for $p \in \mathbb{N}$.
  In particular, if $\{\sigma_p^M(v) \mid p \in \mathbb{N}, v \in B_p \setminus B_{p + 1}\}$ is a basis of $\gr_F(M)$, then $B_0$ is a basis of $M$.
\end{proposition}

\begin{example}[$\gr_F(M(c, h))$]
  \label{exa:5}
  We pick a highest weight $(c, h)$.
  From \zcref{exa:4}, $\gr_F(\Vir^c)$ is isomorphic to $\mathbb{C}[L_{-2}, L_{-3}, \dots]$.
  We have an isomorphism
  \begin{align*}
    \gr_F(M(c, h)) &\xrightarrow{\sim} \bigoplus_{k \in \mathbb{N}}\mathbb{C}[L_{-2}, L_{-3}, \dots]L_{-1}^k, \\
    \sigma_{\Delta(\lambda) - \clen(\lambda)}(L_\lambda|c, h\rangle) &\mapsto u_\lambda \quad \text{for $\lambda$ a composition}.
  \end{align*}
  The Hamiltonian is given by $L_0$.
\end{example}

\section{Classically free boundary minimal models}
\label{sec:class-free-bound}

Let $V$ be a vertex algebra.
By \cite[Lemma 2.9]{li_abelianizing_2005}, we have
\begin{equation*}
  C_2V = \vspan\{a_{(-2)}b \mid a, b \in V\} = F_1V.
\end{equation*}
We define the \emph{Zhu $C_2$-algebra of $V$} by
\begin{equation*}
  R_V = V/C_2V = F_0V/F_1V \subseteq \gr_F(V).
\end{equation*}

The fact that $\gr_F(V)$ is a vertex Poisson algebra implies that $R_V$ is a Poisson algebra with operations given as follows.
For $a, b \in V$, we set:
\begin{align*}
  \sigma_0(a)\sigma_0(b) &= \sigma_0(a_{(-1)}b), \\
  \{\sigma_0(a), \sigma_0(b)\} &= \sigma_0(a_{(0)}b).
\end{align*}

If $V$ is graded, then, as we explained in \zcref{sec:li-filtr}, $\gr_F(V)$ is graded.
Thus, $R_V$ is also graded.

We have constructed a functor
\begin{equation*}
  R: \{\text{(graded) vertex algebras}\} \to \{\text{(graded) Poisson algebras}\}.
\end{equation*}

Oftentimes, some condition on $R_V$ implies or is equivalent to some condition on $V$.
The vertex algebra $V$ is called \emph{$C_2$-cofinite} if $R_V$ is finite dimensional.

\begin{example}[$R_{\Vir^c}$ and $R_{\Vir_c}$]
  \label{exa:6}
  We pick $c \in \mathbb{C}$.
  We consider $\mathbb{C}[L_{-2}]$ as the polynomial algebra in one variable $L_{-2}$, and we equip it with the trivial Poisson bracket.
  By \zcref{exa:4}, we have the following isomorphism of Poisson algebras
  \begin{align*}
    R_{\Vir^c} &\xrightarrow{\sim} \mathbb{C}[L_{-2}], \\
    \sigma_0(L_{-2}\vac) &\mapsto L_{-2}.
  \end{align*}

  We now move on to $\Vir_c$.
  If $c$ is not of the form $c_{p, q}$ for some $p, q \ge 2$ relatively prime integers, then $\Vir_c = \Vir^c$ by \zcref{thr:4}, and we have already solved the problem.
  Therefore, we assume $c$ is of this form.

  We have a natural quotient map
  \begin{align*}
    \pi_c: \Vir^c &\twoheadrightarrow \Vir_c, \\
    \pi_c(a) &= a + U(\Vir)\{a_{p, q}\}.
  \end{align*}
  Applying the functor $R$, we obtain an epimorphism
  \begin{equation*}
    R_{\pi_c}: R_{\Vir^c} \twoheadrightarrow R_{\Vir_c}.
  \end{equation*}
  From the equation $\ker(R_{\pi_c}) = \sigma_0(U(\Vir)\{a_{p, q}\})$ and \zcref{thr:4}, we obtain
  \begin{equation*}
    \ker(R_{\pi_c}) = (\sigma_0(L_{-2}^{(p - 1)(q - 1)/2}\vac)).
  \end{equation*}
  In summary,
  \begin{equation*}
    R_{\Vir_{p, q}} \cong \mathbb{C}[L_{-2}]/(L_{-2}^{(p - 1)(q - 1)/2}).
  \end{equation*}
  Thus, $\Vir^c$ is never $C_2$-cofinite, while $\Vir_c$ is $C_2$-cofinite only when $c$ is of the form $c_{p, q}$ for some $p, q \ge 2$ relatively prime integers.
\end{example}

\begin{lemma}[{\cite[Corollary 4.3]{li_abelianizing_2005}}]
  \label{lmm:7}
  Let $V$ be a vertex algebra.
  As a differential algebra, $\gr_F(V)$ is generated by $R_V$, i.e.,
  \begin{equation*}
    \gr_F(V) = (R_V)_T.
  \end{equation*}
\end{lemma}

Let $V$ be a (graded) vertex algebra.
We have a natural (graded) algebra inclusion $\inc: R_V \hookrightarrow \gr_F(V)$.
By the universal property of $\inc: R_V \hookrightarrow JR_V$ (see \zcref{sec:graded-jet-algebras}), there is a (graded) differential algebra homomorphism $\phi_V: JR_V \to \gr_F(V)$ such that the following diagram commutes
\begin{equation*}
  \begin{tikzcd}
    R_V \arrow[rd, "\inc"', hook] \arrow[r, "\inc", hook] & JR_V \arrow[d, "\phi_V"] \\
    & \gr_F(V)
  \end{tikzcd}
\end{equation*}
Because $R_V$ is a (graded) Poisson algebra, we can equip $JR_V$ with the level 0 vertex Poisson algebra structure, as explained in \zcref{sec:graded-vert-poiss}.
From now on, $JR_V$ will be considered as a (graded) vertex Poisson algebra.

\begin{lemma}
  \label{lmm:8}
  Let $V$ be a (graded) vertex algebra.
  The (graded) differential algebra homomorphism $\phi_V: JR_V \to \gr_F(V)$ defined above is surjective and is actually a (graded) vertex Poisson algebra homomorphism.
\end{lemma}

\begin{proof}
  The homomorphism $\phi_V$ is surjective by \zcref{lmm:7}.
  The fact that $\phi_V$ is a (graded) vertex Poisson algebra homomorphism is explained in \cite[Proposition 2.5.1]{arakawa_remark_2012}.
\end{proof}

When $\phi_V$ is an isomorphism, we say $V$ is \emph{classically free}.

\begin{example}[$JR_{\Vir^c}$ and $JR_{\Vir_c}$]
  \label{exa:7}
  Let $c \in \mathbb{C}$.
  Then $\Vir^c$ is classically free because
  \begin{equation*}
    \gr_F(\Vir^c) \cong \mathbb{C}[L_{-2}, L_{-3}, \dots]
  \end{equation*}
  by \zcref{exa:4} and
  \begin{equation*}
    JR_{\Vir^c} \cong J(\mathbb{C}[L_{-2}]) = \mathbb{C}[L_{-2}, L_{-3}, \dots]
  \end{equation*}
  by \zcref{exa:6} and \zcref{sec:graded-jet-algebras}.

  We now move on to $\Vir_c$.
  If $c$ is not of the form $c_{p, q}$ for some $p, q \ge 2$ relatively prime integers, then $\Vir_c = \Vir^c$ by \zcref{thr:4}, and we have already solved the problem.
  Therefore, we assume $c$ is of this form.
  Then
  \begin{equation*}
    JR_{\Vir_{p, q}} \cong J(\mathbb{C}[L_{-2}]/(L_{-2}^{(p - 1)(q - 1)/2})) = \mathbb{C}[L_{-2}, L_{-3}, \dots]/(L_{-2}^{(p - 1)(q - 1)/2})_\partial
  \end{equation*}
  by \zcref{exa:6} and \zcref{sec:graded-jet-algebras}.
\end{example}

\begin{example}
  \label{exa:8}
  If $q > p \ge 2$ are relatively prime integers, then $\Vir_{p, q}$ is classically free if and only if $p = 2$ by \cite{van_ekeren_chiral_2021}.
\end{example}

\begin{example}
  \label{exa:9}
  It was proven in \cite{andrews_singular_2022} that the Ising model $\Vir_{3, 4}$ is not classically free.
  In fact, by \cite[Theorem 2]{andrews_singular_2022},
  \begin{equation*}
    \ker(\phi_{\Vir_{3, 4}}) = (b)_\partial,
  \end{equation*}
  where
  \begin{equation*}
    b = L_{-4}L_{-3}L_{-2} + \tfrac{1}{6}L_{-5}L_{-2}^2,
  \end{equation*}
  and $(b)_\partial$ is the differential ideal generated by $b$, cf.\ \zcref{sec:graded-jet-algebras} and \zcref{exa:4} where $\partial$ is denoted by $T$.
\end{example}

Let $V$ be a vertex algebra, and let $M$ be a module over $V$.
By \cite[Lemma 2.9]{li_abelianizing_2005}, we have
\begin{equation*}
  C_2M = \vspan\{a^M_{(-2)}u \mid a \in V, u \in M\} = F_1M.
\end{equation*}
We define the \emph{Zhu $C_2$-module of $M$} by
\begin{equation*}
  R_M = M/C_2M = F_0M/F_1M \subseteq \gr_F(M).
\end{equation*}
The fact that $\gr_F(M)$ is a module over $\gr_F(V)$ implies that $R_M$ is a module over $R_V$ with operations given as follows.
For $a \in V$ and $u \in M$, we set:
\begin{align*}
  \sigma_0(a)\sigma^M_0(u) &= \sigma^M_0(a^M_{(-1)}u), \\
  \{\sigma_0(a), \sigma^M_0(u)\} &= \sigma^M_0(a^M_{(0)}u).
\end{align*}

If $V$ is graded and $M$ is a graded $V$-module, then, as we explained in \zcref{sec:li-filtr}, $\gr_F(M)$ is a graded $\gr_F(V)$-module.
Thus, $R_M$ is also a graded $R_V$-module.

\begin{example}[$R_{M(c, h)}$ and $R_{L(c, h)}$]
  \label{exa:10}
  We pick a highest weight $(c, h)$.
  As in \zcref{exa:6}, we consider $\mathbb{C}[L_{-2}]$ as the polynomial algebra in one variable $L_{-2}$, and we equip it with the trivial Poisson bracket.
  We consider $\bigoplus_{k \in \mathbb{N}}\mathbb{C}[L_{-2}]L_{-1}^k$ as a module over the Poisson algebra $\mathbb{C}[L_{-2}]$ with Poisson bracket given by $\{L_{-2}, L_{-1}^k\} = L_{-1}^{k + 1}$ for $k \in \mathbb{N}$.
  By \zcref{exa:5} and \zcref{exa:6}, we have the following isomorphism of modules over Poisson algebras
  \begin{align*}
    R_{M(c, h)} &\xrightarrow{\sim} \bigoplus_{k \in \mathbb{N}}\mathbb{C}[L_{-2}]L_{-1}^k, \\
    \sigma_0(L_{-2}|c, h\rangle) &\mapsto L_{-2}, \\
    \sigma_0(L_{-1}|c, h\rangle) &\mapsto L_{-1}.
  \end{align*}
  As in \zcref{exa:6}, we obtain
  \begin{equation*}
    R_{L(c, h)} \cong \frac{\bigoplus_{k \in \mathbb{N}}\mathbb{C}[L_{-2}]L_{-1}^k}{\sigma_0(J(c, h))}.
  \end{equation*}
\end{example}

\begin{theorem}[{\cite[Lemma 4.2]{li_abelianizing_2005}}]
  \label{thr:6}
  Let $V$ be a vertex algebra, and let $M$ be a $V$-module.
  As a $\gr_F(V)$-module without the Poisson structure, $\gr_F(M)$ is generated by $R_M$, i.e.,
  \begin{equation*}
    \gr_F(M) = \gr_F(V)R_M.
  \end{equation*}
\end{theorem}

Let $V$ be a (graded) vertex algebra, and let $M$ be a (graded) $V$-module.
We have a natural (graded) inclusion $\inc: R_M \hookrightarrow \gr_F(M)$ of (graded) $R_V$-modules.
As we explained in \zcref{sec:graded-vert-poiss}, we can consider $JR_V \otimes_{R_V} R_M$ as a (graded) $JR_V$-module.
Because we have a (graded) epimorphism of vertex Poisson algebras $\phi_V: JR_V \twoheadrightarrow \gr_F(V)$, we can consider the (graded) $\gr_F(V)$-module $\gr_F(M)$ as a (graded) $JR_V$-module.
By the universal property of $\inc: R_M \hookrightarrow JR_V \otimes_{R_V} R_M$ (see \zcref{sec:graded-vert-poiss}), there is a (graded) $JR_V$-module homomorphism $\phi_M: JR_V \otimes_{R_V} R_M \to \gr_F(M)$ such that the following diagram commutes
\begin{equation*}
  \begin{tikzcd}
    R_M \arrow[r, "\inc", hook] \arrow[rd, "\inc"', hook] & JR_V \otimes_{R_V} R_M \arrow[d, "\phi_M"] \\
    & \gr_F(M)
  \end{tikzcd}
\end{equation*}

\begin{lemma}
  \label{lmm:9}
  The (graded) $JR_V$-module homomorphism $\phi_M: JR_V \otimes_{R_V} R_M \to \gr_F(M)$ defined above is surjective.
\end{lemma}

\begin{proof}
  The assertion follows from \zcref{lmm:8} and \zcref{thr:6}.
  This is also explained in a slightly more general setting in \cite{arakawa_associated_2015}.
\end{proof}

When $\phi_M$ is an isomorphism, we say $M$ is \emph{classically free}.

\begin{example}[$JR_{\Vir^c} \otimes_{R_{\Vir^c}} R_{M(c, h)}$]
  \label{exa:11}
  We pick a highest weight $(c, h)$.
  The Verma module $M(c, h)$ is always classically free as a $\Vir^c$-module because by \zcref{exa:5}, \zcref{exa:6} and \zcref{exa:10},
  \begin{align*}
    JR_{\Vir^c} \otimes_{R_{\Vir^c}} R_{M(c, h)} &\cong \mathbb{C}[L_{-2}, L_{-3}, \dots] \otimes_{\mathbb{C}[L_{-2}]} \bigoplus_{k \in \mathbb{N}}\mathbb{C}[L_{-2}]L_{-1}^k \\
                                           &\cong \bigoplus_{k \in \mathbb{N}}\mathbb{C}[L_{-2}, L_{-3}, \dots]L_{-1}^k \\
                                           &\cong \gr_F(M(c, h)).
  \end{align*}
\end{example}

We wish to determine when the irreducible modules $L(c_{p, q}, h_{m, n})$ over the simple Virasoro vertex algebras $\Vir_{p, q}$ are classically free, as mentioned in \zcref{thr:3} and \zcref{thr:5}.

\begin{proof}[Proof of \zcref{thr:3}]
  By \zcref{exa:10}, we have
  \begin{equation*}
    R_{L(c_{2, 2s + 1}, h_{1, i})} \cong \frac{\bigoplus_{k \in \mathbb{N}}\mathbb{C}[L_{-2}]L_{-1}^k}{\sigma_0(J(c_{2, 2s + 1}, h_{1, i}))}.
  \end{equation*}
  We use \zcref{prp:6} with the basis $B_0$ provided by \zcref{thr:2} and
  \begin{equation*}
    B_p = \{L_{\lambda}|c_{2, 2s + 1}, h_{1, i}\rangle \mid \lambda \in P^{s, i}, \Delta(\lambda) - \clen(\lambda) \ge p\} \quad \text{for $p \in \mathbb{N}$}.
  \end{equation*}
  Therefore, the set
  \begin{equation}
    \label{eq:9}
    \{u_{\lambda} + \sigma_0(J(c_{2, 2s + 1}, h_{1, i})) \mid \lambda \in P^{s, i}, \Delta(\lambda) - \clen(\lambda) = 0\}
  \end{equation}
  is a basis of $R_{L(c_{2, 2s + 1}, h_{1, i})}$.
  We note that for a partition $\lambda$, $\Delta(\lambda) - \clen(\lambda) = 0$ is equivalent to $\lambda = \emptyset$ or $2 \ge \lambda_1$.

  By \cite{van_ekeren_chiral_2021} or \cite{bruschek_arc_2013}, we already know $\Vir_{2, 2s + 1}$ is classically free.
  This means that $\gr_F(\Vir_{2, 2s + 1}) \cong JR_{\Vir_{2, 2s + 1}} \cong J(\mathbb{C}[L_{-2}]/(L_{-2}^s)) = \mathbb{C}[L_{-2}, L_{-3}, \dots]/(L_{-2}^s)_\partial$ and
  \begin{equation}
    \label{eq:10}
    \{p_\lambda + (L_{-2}^s)_\partial \mid \lambda \in P^{s, 1}\}
  \end{equation}
  is a basis of $JR_{\Vir_{2, 2s + 1}}$.

  We have
  \begin{equation}
    \label{eq:11}
    JR_{\Vir_{2, 2s + 1}} \otimes_{R_{\Vir_{2, 2s + 1}}} R_{L(c_{2, 2s + 1}, h_{1, i})} \cong \frac{\mathbb{C}[L_{-2}, L_{-3}, \dots]}{(L_{-2}^s)_\partial} \otimes_{\frac{\mathbb{C}[L_{-2}]}{(L_{-2}^s)}}\frac{\bigoplus_{k \in \mathbb{N}}\mathbb{C}[L_{-2}]L_{-1}^k}{\sigma_0(J(c_{2, 2s + 1}, h_{1, i}))}.
  \end{equation}

  Given a partition $\lambda \in P^{s, i}$, we can write $\lambda$ uniquely as $\lambda = [\lambda^1, \lambda^2]$, where $\lambda^1 \in P^{s, 1}$ satisfies $\lambda^1 = \emptyset$ or $\lambda^1_{\len(\lambda^1)} \ge 3$ and $\lambda^2 = \emptyset$ or ($\lambda^2 \in P^{s, i}$ and $2 \ge \lambda^2_1$).

  Given $\lambda \in P^{s, i}$, we define
  \begin{equation*}
    t_\lambda = (p_{\lambda^1} + (L_{-2}^s)_\partial)\otimes(p_{\lambda^2} + \sigma_0(J(c_{2, 2s + 1}, h_{1, i}))) \in \frac{\mathbb{C}[L_{-2}, L_{-3}, \dots]}{(L_{-2}^s)_\partial} \otimes_{\frac{\mathbb{C}[L_{-2}]}{(L_{-2}^s)}} \frac{\bigoplus_{k \in \mathbb{N}}\mathbb{C}[L_{-2}]L_{-1}^k}{\sigma_0(J(c_{2, 2s + 1}, h_{1, i}))}.
  \end{equation*}
  From the basis \eqref{eq:9} of $R_{L(c_{2, 2s + 1}, h_{1, i})}$, the basis \eqref{eq:10} of $JR_{\Vir_{2, 2s + 1}}$ and \eqref{eq:11}, we conclude that
  \begin{equation}
    \label{eq:12}
    JR_{\Vir_{2, 2s + 1}} \otimes_{R_{\Vir_{2, 2s + 1}}} R_{L(c_{2, 2s + 1}, h_{1, i})} = \vspan\{t_\lambda \mid \lambda \in P^{s, i}\}.
  \end{equation}
  By \zcref{thr:2} and \zcref{prp:6}, the natural epimorphism $\phi_{L(c_{2, 2s + 1}, h_{1, i})}$ maps the set $\{t_\lambda \mid \lambda \in P^{s, i}\}$ onto a basis of $\gr_F(L(c_{2, 2s + 1}, h_{1, i}))$.
  Thus, the set $\{t_\lambda \mid \lambda \in P^{s, i}\}$ is linearly independent, and by \eqref{eq:12}, it is a basis of $JR_{\Vir_{2, 2s + 1}} \otimes_{R_{\Vir_{2, 2s + 1}}} R_{L(c_{2, 2s + 1}, h_{1, i})}$.
  Therefore, $\phi_{L(c_{2, 2s + 1}, h_{1, i})}$ is an isomorphism, and $L(c_{2, 2s + 1}, h_{1, i})$ is classically free.

  Let $p, q > 2$ be relatively prime integers, and let $m, n$ be integers such that $0 < m < p$ and $0 < n < q$.
  We have two natural inclusions $\inc: JR_{\Vir_{p, q}} \hookrightarrow JR_{\Vir_{p, q}} \otimes_{R_{\Vir_{p, q}}} R_{L(c_{p, q}, h_{m, n})}, \inc(a) = a\otimes1$ and $\inc: \gr_F(\Vir_{p, q}) \hookrightarrow \gr_F(L(c_{p, q}, h_{m, n})), \inc(a) = a1$.
  We assume, for the sake of contradiction, that $\phi_{L(c_{p, q}, h_{m, n})}$ is an isomorphism.
  This would imply that $\phi_{\Vir_{p, q}}$ is also an isomorphism, a contradiction to \zcref{exa:8}.
  Finally, $\phi_{L(c_{p, q}, h_{m, n})}$ is not an isomorphism, and $L(c_{p, q}, h_{m, n})$ is not classically free as a module over $\Vir_{p, q}$.
\end{proof}

As in the proof of \zcref{lmm:5}, let $w \in J(c_{2, 2s + 1}, h_{1, i})$ be the singular vector such that $\lp(w) = L_{-1}^i|c_{2, 2s + 1}, h_{1, i}\rangle$, and we set:
\begin{align*}
  u_k &= \sigma_0((a_{2, 2s + 1})_{(k - 1)}|c_{2, 2s + 1}, h_{1, i}\rangle) \quad \text{for $k = 1, \dots, i - 1$}, \\
  u_i &= \sigma_0(w).
\end{align*}
Then $u_1, \dots, u_i \in \sigma_0(J(c_{2, 2s + 1}, h_{1, i}))$ by \zcref{thr:5}.
Moreover:
\begin{align*}
  L_{-2}^s &= \sigma_0(a_{2, 2s + 1})\sigma_0(|c_{2, 2s + 1}, h_{1, i}\rangle) = \sigma_0((a_{2, 2s + 1})_{(-1)}|c_{2, 2s + 1}, h_{1, i}\rangle) \in \sigma_0(J(c_{2, 2s + 1}, h_{1, i})), \\
  \lp(u_k) &= L_{-2}^{s - k}L_{-1}^k \quad \text{for $k = 1, \dots, i - 1$}, \\
  \lp(u_i) &= L_{-1}^i.
\end{align*}

\begin{corollary}
  \label{crl:1}
  With the notation above, we have:
  \begin{align}
    \label{eq:13}
    \sigma_0(J(c_{2, 2s + 1}, h_{1, i})) &= (L_{-2}^s, u_1, \dots, u_i)_\psn, \\
    \label{eq:14}
    R_{L(c_{2, 2s + 1}, h_{1, i})} &\cong \frac{\bigoplus_{k \in \mathbb{N}}\mathbb{C}[L_{-2}]L_{-1}^k}{(L_{-2}^s, u_1, \dots, u_i)_\psn}, \\
    \label{eq:15}
    \gr_F(L(c_{2, 2s + 1}, h_{1, i})) &\cong \frac{\mathbb{C}[L_{-2}, L_{-3}, \dots]}{(L_{-2}^s)_\partial} \otimes_{\frac{\mathbb{C}[L_{-2}]}{(L_{-2}^s)}} \frac{\bigoplus_{k \in \mathbb{N}}\mathbb{C}[L_{-2}]L_{-1}^k}{(L_{-2}^s, u_1, \dots, u_i)_\psn},
  \end{align}
  where the subscript $\psn$ denotes the Poisson submodule generated by the given subset.
\end{corollary}

\begin{proof}[Sketch of proof]
  We use the notation of the proof of \zcref{thr:3}.
  Let $K' = (L_{-2}^s, u_1, \dots, u_i)_\psn \subseteq \sigma_0(J(c_{2, 2s + 1}, h_{1, i}))$ be the graded submodule of $\bigoplus_{k \in \mathbb{N}}\mathbb{C}[L_{-2}]L_{-1}^k$.
  We note that for a partition $\lambda \in R^{s, i}$ with $\lambda = \emptyset$ or $2 \ge \lambda_1$, there is $u \in K'$ such that $\lp(u) = u_\lambda$.
  We can prove that $K' = \sigma_0(J(c_{2, 2s + 1}, h_{1, i}))$ using the argument of (the proof of) \cite[Corollary 4.1]{salazar_pbw_2024} comparing the dimensions of the graded components of $\bigoplus_{k \in \mathbb{N}}\mathbb{C}[L_{-2}]L_{-1}^k/\sigma_0(J(c_{2, 2s + 1}, h_{1, i}))$ and $\bigoplus_{k \in \mathbb{N}}\mathbb{C}[L_{-2}]L_{-1}^k/K'$ up to $N \in \mathbb{N}$, and then we let $N \to \infty$ to conclude.
  The isomorphisms \eqref{eq:14} and \eqref{eq:15} follow.
\end{proof}

\begin{remark}
  \label{rmk:3}
  We have
  \begin{equation*}
    u_1 = \sigma_0((a_{2, 2s + 1})_{(0)}|c_{2, 2s + 1}, h_{1, i}\rangle) = \{\sigma_0(a_{2, 2s + 1}), \sigma_0(|c_{2, 2s + 1}, h_{1, i}\rangle)\} = \{L_{-2}^s, 1\} = sL_{-2}^{s - 1}L_{-1}.
  \end{equation*}
  However, I think there is no simple expression for $u_2, \dots, u_i$ and $R_{L(c_{2, 2s + 1}, h_{1, i})}$.
\end{remark}

\appendix
\section{Graded Poisson algebras and their modules}
\label{sec:grad-poiss-algeb}

Let $A$ be a commutative associative algebra.
A \emph{Hamiltonian operator of $A$} is a diagonalizable operator $H \in \End(A)$ such that
\begin{equation*}
  H(ab) = H(a)b + aH(b) \quad \text{for $a, b \in A$}.
\end{equation*}
Thus, a Hamiltonian of $A$ is just a derivation of $A$.
An algebra with a Hamiltonian is called \emph{graded}.

Let $A$ be a differential commutative associative algebra with derivation $\partial$.
A \emph{Hamiltonian operator of $A$} is a Hamiltonian of $A$ as a commutative associative algebra such that
\begin{equation*}
  [H, \partial] = \partial.
\end{equation*}
It is possible to prove inductively that the last equation implies
\begin{equation}
  \label{eq:16}
  H\partial^n = n\partial^n + \partial^nH \quad \text{for $n \in \mathbb{N}$}.
\end{equation}

Let $A$ be a Poisson algebra as defined in \cite{caressa_examples_2003}.
A \emph{Hamiltonian operator of $A$} is a Hamiltonian $H$ of $A$ as a commutative associative algebra such that
\begin{equation*}
  H(\{a, b\}) = \{a, H(b)\} + \{H(a), b\} - \{a, b\} \quad \text{for $a, b \in A$}.
\end{equation*}

Let $A$ be a graded commutative associative algebra with Hamiltonian $H$, and let $M$ be an $A$-module.
If $A$ has a unit $1$, we further assume that $1u = u$ for $u \in M$.
A \emph{Hamiltonian operator of $M$} is a diagonalizable operator $H^M \in \End(M)$ such that
\begin{equation*}
  H^M(au) = H(a)u + aH^M(u) \quad \text{for $a \in A$ and $u \in M$}.
\end{equation*}

Let $A$ be a Poisson algebra.
A \emph{module over $A$} is an $A$-module $M$ in the usual associative sense equipped with a bilinear map $\{\bullet, \bullet\}: A \times M \to M$, which makes $M$ a Lie algebra module over $A$ such that for $a, b \in A$ and $u \in M$:
\begin{enumerate}
\item (Left Leibniz rule) $\{a, bu\} = \{a, b\}u + b\{a, u\}$;
\item (Right Leibniz rule) $\{ab, u\} = a\{b, u\} + b\{a, u\}$.
\end{enumerate}

Let $A$ be a graded Poisson algebra with Hamiltonian $H$, and let $M$ be an $A$-module.
A \emph{Hamiltonian operator of $M$} is a Hamiltonian $H^M$ of $M$ as a module over $A$ as a commutative associative algebra such that
\begin{equation*}
  H^M(\{a, u\}) = \{a, H^M(u)\} + \{H(a), u\} - \{a, u\} \quad \text{for $a \in A$ and $u \in M$}.
\end{equation*}

\section{Graded Jet algebras and their modules}
\label{sec:graded-jet-algebras}

In this appendix, by an algebra we will mean a commutative associative algebra with unit.
Let $R$ be a finitely generated algebra.
We now construct a differential algebra $JR$ called the \emph{jet algebra of $R$} and an algebra inclusion $\inc: R \hookrightarrow JR$ universal with this property, i.e., for a differential algebra $A$ and an algebra homomorphism $\phi: R \to A$, there is a unique differential algebra homomorphism $\overline{\phi}: JR \to A$ such that the following diagram commutes
\begin{equation*}
  \begin{tikzcd}
    R \arrow[r, "\inc", hook] \arrow[rd, "\phi"'] & JR \arrow[d, "\overline{\phi}"] \\
    & A
  \end{tikzcd}
\end{equation*}

Assuming $R = \mathbb{C}[x^1, \dots, x^r]/(f_1, \dots, f_s)$ for some polynomials $f_1, \dots, f_s \in \mathbb{C}[x^1, \dots, x^r]$, the construction is as follows.
We introduce new variables $x^j_{(-i)}$ for $i = 1, 2, \dots$, $j = 1, \dots, r$ and a derivation $\partial$ of the polynomial algebra $\mathbb{C}[x^j_{(-i)} \mid i = 1, 2, \dots, j = 1, \dots, r]$ by setting
\begin{equation*}
  \partial x^j_{(-i)} = ix^j_{(-i - 1)} \quad \text{for $i = 1, 2, \dots$, $j = 1, \dots, r$}.
\end{equation*}
We set (identifying $x^j$ with $x^j_{(-1)}$ when considering $f_i$ in the following equation)
\begin{align*}
  JR &= \mathbb{C}[x^j_{(-i)} \mid i = 1, 2, \dots, j = 1, \dots, r]/(\partial^jf_i \mid i = 1, \dots, s, j = 0, 1, \dots) \\
     &= \mathbb{C}[x^j_{(-i)} \mid i = 1, 2, \dots, j = 1, \dots, r]/(f_1, \dots, f_s)_\partial,
\end{align*}
where the subscript $\partial$ indicates the differential ideal generated by the given subset.
By our definitions, $\partial$ factors through a derivation of $JR$, and we have an algebra inclusion $\inc: R \hookrightarrow JR, \inc(x^j + (f_1, \dots, f_s)) = x^j_{(-1)} + (f_1, \dots, f_s)_\partial$ for $j = 1, \dots, r$.
The fact that $\inc: R \hookrightarrow JR$ satisfies our desired universal property is explained in \cite[\S2.3]{arakawa_remark_2012} and \cite{ein_jet_2008}.

\begin{remark}
  \label{rmk:4}
  We see that the classes of the original variables $x^j$ generate $JR$ as a differential algebra, i.e.,
  \begin{equation*}
    JR = (x^1 + (f_1, \dots, f_s)_\partial, \dots, x^r + (f_1, \dots, f_s)_\partial)_\partial.
  \end{equation*}
\end{remark}

If $f: R_1 \to R_2$ is a homomorphism of finitely generated algebras, then $Jf: JR_1 \to JR_2$ is defined by requiring that $Jf(\partial_1^n(\inc_1(x))) = \partial^n_2(\inc_2(f(x)))$ for $x \in R_1$ and $n \in \mathbb{N}$ (cf.\ \zcref{rmk:4}).
For a finitely generated algebra $R$ and a differential algebra $A$, we have a natural isomorphism
\begin{equation*}
  \Hom_{\{\text{differential algebras}\}}(JR, A) \cong \Hom_{\{\text{algebras}\}}(R, A).
\end{equation*}

\begin{remark}
  \label{rmk:5}
  It is not true that we have defined a functor $J$ which is left adjoint to the forgetful functor $\{\text{differential algebras}\} \to \{\text{finitely generated algebras}\}$ because a differential algebra is generally not finitely generated.

  But we can easily work in the general case as follows.
  Let $R$ be an algebra (not necessarily finitely generated), and we consider the polynomial algebra in $R$ variables $\mathbb{C}[(x^j)_{j \in R}]$.
  We have a natural epimorphism $\pi: \mathbb{C}[(x^j)_{j \in R}] \twoheadrightarrow R, \pi(x^j) = j$ for $j \in R$.
  We can repeat the construction of $JR$ with $\mathbb{C}[(x^j)_{j \in R}]$ in place of $\mathbb{C}[x^1, \dots, x^r]$ and $\ker(\pi)$ in place of $(f_1, \dots, f_s)$.
  This way, we construct a functor
  \begin{equation*}
    J: \{\text{algebras}\} \to \{\text{differential algebras}\},
  \end{equation*}
  which is left adjoint to the forgetful functor $\{\text{differential algebras}\} \to \{\text{algebras}\}$.
\end{remark}

Given an algebra $R$, it is useful to consider the functor
\begin{equation*}
  JR \otimes_R \bullet: \text{$R$-Mod} \to \text{$JR$-Mod},
\end{equation*}
where $JR$ is merely considered an algebra.
Again, the $JR$-module $M$ together with the $R$-module inclusion $\inc: M \hookrightarrow JR \otimes_R M, \inc(u) = 1\otimes u$ satisfy a universal property similar to that of $\inc: R \hookrightarrow JR$, and the functor $JR \otimes_R \bullet$ is left adjoint to the forgetful functor $\text{$JR$-Mod} \to \text{$R$-Mod}$.

Let $R$ be a graded algebra with Hamiltonian $H$.
We can extend uniquely the Hamiltonian $H$ to a Hamiltonian $H^{JR} \in \End(JR)$ because of \eqref{eq:16}.
Furthermore, if $M$ is a graded $R$-module with Hamiltonian $H^M$, we can define a Hamiltonian of $JR \otimes_R M$ by setting
\begin{equation*}
  H^{JR \otimes_R M} = H^{JR}\otimes\Id_M + \Id_{JR}\otimes H^M.
\end{equation*}

\section{Graded vertex Poisson algebras and their modules}
\label{sec:graded-vert-poiss}

\begin{proposition}[{\cite{arakawa_associated_2015}, \cite[Proposition 2.3.1]{arakawa_remark_2012} and \zcref{sec:graded-jet-algebras}}]
  \label{prp:7}
  Let $R$ be a (graded) Poisson algebra.
  Then there is a unique (graded) vertex Poisson algebra structure on $JR$ such that
  \begin{equation*}
    a_{(n)}b = \delta_{n, 0}\{a, b\} \quad \text{for $a, b \in R$ and $n \in \mathbb{N}$}.
  \end{equation*}
\end{proposition}

The (graded) vertex Poisson algebra structure on $JR$ given in \zcref{prp:7} for a (graded) Poisson algebra $R$ will be called the \emph{level 0 vertex Poisson algebra structure of $JR$}.

If $R$ is a (graded) Poisson algebra, and $M$ is a (graded) $R$-module, then we can verify that $JR \otimes_R M$ is a (graded) $JR$-module by defining the Poisson structure as
\begin{equation*}
  a_{(n)}(b\otimes u) = (a_{(n)}b)\otimes u + \delta_{n, 0}b\otimes\{a, u\} \quad \text{for $a \in R$, $b \in JR$, $u \in M$ and $n \in \mathbb{N}$}
\end{equation*}
and $H^{JR \otimes_R M}$ as in \zcref{sec:graded-jet-algebras}.

The (graded) $R$-module $M$ together with the natural (graded) $R$-module inclusion $\inc: M \hookrightarrow JR \otimes_R M$ satisfy the following universal property.
Let $N$ be a (graded) module over the (graded) vertex Poisson algebra $JR$, and let $\phi: M \to N$ be a (graded) homomorphism of modules over the (graded) Poisson algebra $R$.
There is a unique (graded) homomorphism $\overline{\phi}: JR \otimes_R M \to N$ of (graded) modules over the (graded) vertex Poisson algebra $JR$ such that the following diagram commutes
\begin{equation*}
  \begin{tikzcd}
    M \arrow[r, "\inc", hook] \arrow[rd, "\phi"'] & JR \otimes_R M \arrow[d, "\overline{\phi}"] \\
    & N
  \end{tikzcd}
\end{equation*}

\bibliographystyle{alpha}
\bibliography{boundary-minimal-models}

@article{feigin_verma_1984,
	title = {Verma modules over the virasoro algebra},
	volume = {17},
	issn = {0016-2663, 1573-8485},
	url = {http://link.springer.com/10.1007/BF01078118},
	doi = {10.1007/BF01078118},
	language = {en},
	number = {3},
	urldate = {2023-01-18},
	journal = {Functional Analysis and Its Applications},
	author = {Feigin, B. L. and Fuchs, D. B.},
	year = {1984},
	pages = {241--242},
	file = {Feigin and Fuchs - 1984 - Verma modules over the virasoro algebra.pdf:C\:\\Users\\bashi\\Zotero\\storage\\STW32KXU\\Feigin and Fuchs - 1984 - Verma modules over the virasoro algebra.pdf:application/pdf},
}

@article{wang_rationality_1993,
	title = {Rationality of {Virasoro} vertex operator algebras},
	volume = {1993},
	issn = {10737928},
	url = {https://academic.oup.com/imrn/article-lookup/doi/10.1155/S1073792893000212},
	doi = {10.1155/S1073792893000212},
	number = {7},
	urldate = {2023-03-13},
	journal = {International Mathematics Research Notices},
	author = {Wang, Weiqiang},
	year = {1993},
	pages = {197--211},
	file = {Wang - 1993 - Rationality of Virasoro vertex operator algebras.pdf:C\:\\Users\\bashi\\Zotero\\storage\\FP46SCL3\\Wang - 1993 - Rationality of Virasoro vertex operator algebras.pdf:application/pdf},
}

@article{gorelik_simplicity_2007,
	title = {On simplicity of vacuum modules},
	volume = {211},
	issn = {00018708},
	url = {https://linkinghub.elsevier.com/retrieve/pii/S0001870806003239},
	doi = {10.1016/j.aim.2006.09.004},
	language = {en},
	number = {2},
	urldate = {2023-03-13},
	journal = {Advances in Mathematics},
	author = {Gorelik, Maria and Kac, Victor},
	month = jun,
	year = {2007},
	pages = {621--677},
	file = {Gorelik and Kac - 2007 - On simplicity of vacuum modules.pdf:C\:\\Users\\bashi\\Zotero\\storage\\5D9BBFX3\\Gorelik and Kac - 2007 - On simplicity of vacuum modules.pdf:application/pdf},
}

@article{zhu_modular_1996,
	title = {Modular invariance of characters of vertex operator algebras},
	volume = {9},
	issn = {0894-0347, 1088-6834},
	url = {https://www.ams.org/jams/1996-9-01/S0894-0347-96-00182-8/},
	doi = {10.1090/S0894-0347-96-00182-8},
	language = {en},
	number = {1},
	urldate = {2023-03-13},
	journal = {Journal of the American Mathematical Society},
	author = {Zhu, Yongchang},
	year = {1996},
	pages = {237--302},
	file = {Zhu - 1996 - Modular invariance of characters of vertex operato.pdf:C\:\\Users\\bashi\\Zotero\\storage\\IIBZANEM\\Zhu - 1996 - Modular invariance of characters of vertex operato.pdf:application/pdf},
}

@book{lepowsky_introduction_2004,
	address = {Boston, MA},
	title = {Introduction to {Vertex} {Operator} {Algebras} and {Their} {Representations}},
	isbn = {978-1-4612-6480-4 978-0-8176-8186-9},
	url = {http://link.springer.com/10.1007/978-0-8176-8186-9},
	language = {en},
	urldate = {2023-03-13},
	publisher = {Birkhäuser Boston},
	author = {Lepowsky, James and Li, Haisheng},
	year = {2004},
	doi = {10.1007/978-0-8176-8186-9},
	file = {Lepowsky and Li - 2004 - Introduction to Vertex Operator Algebras and Their.pdf:C\:\\Users\\bashi\\Zotero\\storage\\W84GX6Y4\\Lepowsky and Li - 2004 - Introduction to Vertex Operator Algebras and Their.pdf:application/pdf},
}

@incollection{callegaro_introduction_2017-1,
	address = {Cham},
	title = {Introduction to {Vertex} {Algebras}, {Poisson} {Vertex} {Algebras}, and {Integrable} {Hamiltonian} {PDE}},
	volume = {19},
	isbn = {978-3-319-58970-1 978-3-319-58971-8},
	url = {http://link.springer.com/10.1007/978-3-319-58971-8_1},
	urldate = {2023-03-13},
	booktitle = {Perspectives in {Lie} {Theory}},
	publisher = {Springer International Publishing},
	author = {Kac, Victor},
	editor = {Callegaro, Filippo and Carnovale, Giovanna and Caselli, Fabrizio and De Concini, Corrado and De Sole, Alberto},
	year = {2017},
	doi = {10.1007/978-3-319-58971-8_1},
	pages = {3--72},
	file = {Kac - 2017 - Introduction to Vertex Algebras, Poisson Vertex Al.pdf:C\:\\Users\\bashi\\Zotero\\storage\\2QIS7AWU\\Kac - 2017 - Introduction to Vertex Algebras, Poisson Vertex Al.pdf:application/pdf},
}

@article{benoit_degenerate_1988,
	title = {Degenerate conformal field theories and explicit expressions for some null vectors},
	volume = {215},
	issn = {03702693},
	url = {https://linkinghub.elsevier.com/retrieve/pii/0370269388913524},
	doi = {10.1016/0370-2693(88)91352-4},
	language = {en},
	number = {3},
	urldate = {2023-03-12},
	journal = {Physics Letters B},
	author = {Benoit, Louis and Saint-Aubin, Yvan},
	month = dec,
	year = {1988},
	pages = {517--522},
	file = {Benoit and Saint-Aubin - 1988 - Degenerate conformal field theories and explicit e.pdf:C\:\\Users\\bashi\\Zotero\\storage\\8E7EUJW2\\Benoit and Saint-Aubin - 1988 - Degenerate conformal field theories and explicit e.pdf:application/pdf},
}

@article{van_ekeren_chiral_2021,
	title = {Chiral {Homology} of {Elliptic} {Curves} and the {Zhu} {Algebra}},
	volume = {386},
	issn = {0010-3616, 1432-0916},
	url = {https://link.springer.com/10.1007/s00220-021-04026-w},
	doi = {10.1007/s00220-021-04026-w},
	language = {en},
	number = {1},
	urldate = {2023-03-12},
	journal = {Communications in Mathematical Physics},
	author = {Van Ekeren, Jethro and Heluani, Reimundo},
	month = aug,
	year = {2021},
	pages = {495--550},
	file = {Van Ekeren and Heluani - 2021 - Chiral Homology of Elliptic Curves and the Zhu Alg.pdf:C\:\\Users\\bashi\\Zotero\\storage\\ERPMNVFR\\Van Ekeren and Heluani - 2021 - Chiral Homology of Elliptic Curves and the Zhu Alg.pdf:application/pdf},
}

@article{li_abelianizing_2005,
	title = {Abelianizing {Vertex} {Algebras}},
	volume = {259},
	issn = {0010-3616, 1432-0916},
	url = {http://link.springer.com/10.1007/s00220-005-1348-z},
	doi = {10.1007/s00220-005-1348-z},
	language = {en},
	number = {2},
	urldate = {2023-03-12},
	journal = {Communications in Mathematical Physics},
	author = {Li, Haisheng},
	month = oct,
	year = {2005},
	pages = {391--411},
	file = {Li - 2005 - Abelianizing Vertex Algebras.pdf:C\:\\Users\\bashi\\Zotero\\storage\\RTIXFIJ8\\Li - 2005 - Abelianizing Vertex Algebras.pdf:application/pdf},
}

@article{salazar_pbw_2024,
	title = {{PBW} bases of irreducible {Ising} modules},
	volume = {639},
	issn = {00218693},
	url = {https://linkinghub.elsevier.com/retrieve/pii/S0021869323005331},
	doi = {10.1016/j.jalgebra.2023.10.015},
	language = {en},
	urldate = {2024-01-01},
	journal = {Journal of Algebra},
	author = {Salazar, Diego},
	month = feb,
	year = {2024},
	pages = {398--421},
	file = {Salazar - 2024 - PBW bases of irreducible Ising modules.pdf:C\:\\Users\\bashi\\Zotero\\storage\\8FZRN8KK\\Salazar - 2024 - PBW bases of irreducible Ising modules.pdf:application/pdf},
}

@incollection{ein_jet_2008,
    AUTHOR = {Ein, Lawrence and Musta\c{t}\u{a}, Mircea},
     TITLE = {Jet schemes and singularities},
 BOOKTITLE = {Algebraic geometry---{S}eattle 2005. {P}art 2},
    SERIES = {Proc. Sympos. Pure Math.},
    VOLUME = {80},
     PAGES = {505--546},
 PUBLISHER = {Amer. Math. Soc., Providence, RI},
      YEAR = {2009},
   MRCLASS = {14B05 (14E18)},
  MRNUMBER = {2483946},
MRREVIEWER = {Julien Sebag},
       DOI = {10.1090/pspum/080.2/2483946},
       URL = {https://doi.org/10.1090/pspum/080.2/2483946},
}

@article{caressa_examples_2003,
	title = {Examples of {Poisson} modules, {I}},
	volume = {52},
	issn = {0009-725X, 1973-4409},
	url = {http://link.springer.com/10.1007/BF02872764},
	doi = {10.1007/BF02872764},
	language = {en},
	number = {3},
	urldate = {2023-07-30},
	journal = {Rendiconti del Circolo Matematico di Palermo},
	author = {Caressa, Paolo},
	month = oct,
	year = {2003},
	pages = {419--452},
	file = {Caressa - 2003 - Examples of Poisson modules, I.pdf:C\:\\Users\\bashi\\Zotero\\storage\\YLW6MRP4\\Caressa - 2003 - Examples of Poisson modules, I.pdf:application/pdf},
}

@misc{nozaradan_introduction_2008,
	title = {Introduction to {Vertex} {Algebras}},
	url = {http://arxiv.org/abs/0809.1380},
	doi = {10.48550/arXiv.0809.1380},
	abstract = {These lecture notes are intended to give a modest impulse to anyone willing to start or pursue a journey into the theory of Vertex Algebras by reading one of Kac's or Lepowsky-Li's books. Therefore, the primary goal is to provide required tools and help being acquainted with the machinery which the whole theory is based on. The exposition follows Kac's approach. Fundamental examples relevant in Theoretical Physics are also discussed. No particular prerequisites are assumed.},
	urldate = {2023-05-22},
	publisher = {arXiv},
	author = {Nozaradan, Christophe},
	month = nov,
	year = {2008},
	note = {arXiv:0809.1380 [math-ph]
version: 3},
	keywords = {Mathematics - Quantum Algebra, 81T05, Mathematical Physics},
	file = {arXiv.org Snapshot:C\:\\Users\\bashi\\Zotero\\storage\\L25GHDCV\\0809.html:text/html;Nozaradan - 2008 - Introduction to Vertex Algebras.pdf:C\:\\Users\\bashi\\Zotero\\storage\\GVPUUFYY\\Nozaradan - 2008 - Introduction to Vertex Algebras.pdf:application/pdf},
}

@book{kac_bombay_2013,
	address = {Hackensack, New Jersey},
	edition = {{Second}},
	series = {Advanced series in mathematical physics},
	title = {Bombay lectures on highest weight representations of infinite dimensional lie algebras},
	isbn = {978-981-4522-18-2 978-981-4522-19-9},
	abstract = {The second edition of this book incorporates, as its first part, the largely unchanged text of the first edition, while its second part is the collection of lectures on vertex algebras, delivered by Professor Kac at the TIFR in January 2003. The basic idea of these lectures was to demonstrate how the key notions of the theory of vertex algebras--such as quantum fields, their normal ordered product and lambda-bracket, energy-momentum field and conformal weight, untwisted and twisted representations--simplify and clarify the constructions of the first edition of the book. --},
	volume = {29},
	publisher = {World Scientific},
	author = {Kac, Victor G. and Raina, A. K. and Rozhkovskaya, Natasha},
	year = {2013},
	note = {OCLC: ocn858312870},
	keywords = {Infinite dimensional Lie algebras, Quantum field theory},
	file = {Kac et al. - 2013 - Bombay lectures on highest weight representations .pdf:C\:\\Users\\Diego\\Zotero\\storage\\4FKT7EL8\\Kac et al. - 2013 - Bombay lectures on highest weight representations .pdf:application/pdf},
}

@book{frenkel_vertex_2001,
	address = {Providence, R.I},
	series = {Mathematical surveys and monographs},
	title = {Vertex algebras and algebraic curves},
	volume = {88},
	isbn = {978-0-8218-2894-6},
	publisher = {American Mathematical Society},
	author = {Frenkel, Edward and Ben-Zvi, David},
	year = {2001},
	keywords = {Vertex operator algebras, Curves, Algebraic},
	file = {Frenkel and Ben-Zvi - 2001 - Vertex algebras and algebraic curves.pdf:C\:\\Users\\bashi\\Zotero\\storage\\Q39YRGN6\\Frenkel and Ben-Zvi - 2001 - Vertex algebras and algebraic curves.pdf:application/pdf;Frenkel and Ben-Zvi - 2001 - Vertex algebras and algebraic curves(errata).pdf:C\:\\Users\\bashi\\Zotero\\storage\\Y2VM4ZAX\\Frenkel and Ben-Zvi - 2001 - Vertex algebras and algebraic curves(errata).pdf:application/pdf},
}

@book{dixmier_enveloping_1996,
	address = {Providence, R.I},
	series = {Graduate studies in mathematics},
	title = {Enveloping algebras},
	volume = {11},
	isbn = {978-0-8218-0560-2},
	language = {eng},
	publisher = {American Mathematical Society},
	author = {Dixmier, Jacques},
	year = {1996},
	keywords = {Universal enveloping algebras, Representations of algebras, Ideals (Algebra), Lie algebras},
	file = {Dixmier - 1996 - Enveloping algebras.djvu:C\:\\Users\\bashi\\Zotero\\storage\\4V2CY8S3\\Dixmier - 1996 - Enveloping algebras.djvu:application/octet-stream},
}

@article{arakawa_remark_2012,
	title = {A remark on the ${C}_2$-cofiniteness condition on vertex algebras},
	volume = {270},
	issn = {0025-5874, 1432-1823},
	url = {http://link.springer.com/10.1007/s00209-010-0812-4},
	doi = {10.1007/s00209-010-0812-4},
	language = {en},
	number = {1-2},
	urldate = {2023-03-12},
	journal = {Mathematische Zeitschrift},
	author = {Arakawa, Tomoyuki},
	month = feb,
	year = {2012},
	pages = {559--575},
	file = {Arakawa - 2012 - A remark on the C 2-cofiniteness condition on vert.pdf:C\:\\Users\\bashi\\Zotero\\storage\\FRKPB2HH\\Arakawa - 2012 - A remark on the C 2-cofiniteness condition on vert.pdf:application/pdf},
}

@book{andrews_theory_1998,
    AUTHOR = {Andrews, George E.},
     TITLE = {The theory of partitions},
    SERIES = {Cambridge Mathematical Library},
      NOTE = {Reprint of the 1976 original},
 PUBLISHER = {Cambridge University Press, Cambridge},
      YEAR = {1998},
     PAGES = {xvi+255},
      ISBN = {0-521-63766-X},
   MRCLASS = {11P81 (05A17 11P82 11P83)},
  MRNUMBER = {1634067},
}

@article{andrews_singular_2022,
	title = {The {Singular} {Support} of the {Ising} {Model}},
	issn = {1073-7928, 1687-0247},
	url = {https://academic.oup.com/imrn/advance-article/doi/10.1093/imrn/rnab328/6582032},
	doi = {10.1093/imrn/rnab328},
	abstract = {Abstract
            We prove a new quasiparticle sum expression for the character of the Ising model vertex algebra, related to the Jackson–Slater \$q\$-series identity of Rogers–Ramanujan type and to Nahm sums for the matrix \${\textbackslash}left ({\textbackslash}begin \{smallmatrix\}8\&3{\textbackslash}{\textbackslash}3\&2 {\textbackslash}end \{smallmatrix\}{\textbackslash}right ) \$. We find, as consequences, an explicit monomial basis for the Ising model and a description of its singular support. We find that the ideal sheaf of the latter, defining it as a subscheme of the arc space of its associated scheme, is finitely generated as a differential ideal. We prove three new \$q\$-series identities of the Rogers–Ramanujan–Slater type associated with the three irreducible modules of the Virasoro Lie algebra of central charge \$1/2\$. We give a combinatorial interpretation to the identity associated with the vacuum module.},
	language = {en},
	urldate = {2023-03-19},
	journal = {International Mathematics Research Notices},
	author = {Andrews, George E. and Van Ekeren, Jethro and Heluani, Reimundo},
	month = may,
	year = {2022},
	pages = {rnab328},
}

@Inbook{Nahm2007,
author="Nahm, Werner",
title="Conformal Field Theory and Torsion Elements of the Bloch Group",
bookTitle="Frontiers in Number Theory, Physics, and Geometry II: On Conformal Field Theories, Discrete Groups and Renormalization",
year="2007",
publisher="Springer Berlin Heidelberg",
address="Berlin, Heidelberg",
pages="67--132",
abstract="We argue that rational conformally invariant quantum field theories in two dimensions are closely related to torsion elements of the algebraic K-theory group K3(C). If such a field theory has an integrable perturbation with purely elastic scattering matrix, then its partition function has a canonical sum representation. The corresponding asymptotic behaviour of the density of states is given in terms of the solutions of an algebraic equation which can be read off from the scattering matrix. These solutions yield torsion elements of an extension of the Bloch group which seems to be equal to K3(C). The algebraic equations are solved for integrable models given by arbitrary pairs of A-type Cartan matrices. The paper should be readable by mathematicians.",
isbn="978-3-540-30308-4",
doi="10.1007/978-3-540-30308-4_2",
url="https://doi.org/10.1007/978-3-540-30308-4_2"
}

@article{bruschek_arc_2013,
	title = {Arc spaces and the {Rogers}–{Ramanujan} identities},
	volume = {30},
	issn = {1382-4090, 1572-9303},
	url = {http://link.springer.com/10.1007/s11139-012-9401-y},
	doi = {10.1007/s11139-012-9401-y},
	language = {en},
	number = {1},
	urldate = {2024-03-20},
	journal = {The Ramanujan Journal},
	author = {Bruschek, Clemens and Mourtada, Hussein and Schepers, Jan},
	month = jan,
	year = {2013},
	pages = {9--38},
	file = {Submitted Version:C\:\\Users\\bashi\\Zotero\\storage\\6VQUDXUA\\Bruschek et al. - 2013 - Arc spaces and the Rogers–Ramanujan identities.pdf:application/pdf},
}

@article{gordon_combinatorial_1961,
	title = {A {Combinatorial} {Generalization} of the {Rogers}-{Ramanujan} {Identities}},
	volume = {83},
	issn = {00029327},
	url = {https://www.jstor.org/stable/2372962?origin=crossref},
	doi = {10.2307/2372962},
	number = {2},
	urldate = {2024-04-30},
	journal = {American Journal of Mathematics},
	author = {Gordon, Basil},
	month = apr,
	year = {1961},
	pages = {393},
}

@article{kac_modular_1988,
	address = {Providence, Rhode Island},
	title = {Modular {Invariance} in {Mathematics} and {Physics}},
	journal = {Address at the Centennial of the AMS},
	author = {Kac, Victor G.},
	month = aug,
	year = {1988},
	file = {PDF:C\:\\Users\\bashi\\Zotero\\storage\\48PNWKU5\\Kac - 1988 - Modular Invariance in Mathematics and Physics.pdf:application/pdf},
}

@article{feigin_coinvariants_1993,
	title = {Coinvariants of nilpotent subalgebras of the {Virasoro} algebra and partition identities},
	volume = {16},
	journal = {Adv. Sov. Math.},
	author = {Feigin, Boris L. and Frenkel, Edward},
	year = {1993},
	pages = {139--148},
	file = {Feigin and Frenkel - 1993 - Coinvariants of nilpotent subalgebras of the Virasoro algebra and partition identities.pdf:C\:\\Users\\bashi\\Zotero\\storage\\7NUAC92K\\Feigin and Frenkel - 1993 - Coinvariants of nilpotent subalgebras of the Virasoro algebra and partition identities.pdf:application/pdf},
}

@article{feigin_annihilating_1992,
	title = {{The} {Annihilating} {Ideals} of {Minimal} {Models}},
	volume = {07},
	issn = {0217-751X, 1793-656X},
	url = {https://www.worldscientific.com/doi/abs/10.1142/S0217751X92003793},
	doi = {10.1142/S0217751X92003793},
	abstract = {We describe several aspects of the annihilating ideals and reduced chiral algebras of conformal field theories, especially, minimal models of W 
              n 
              algebras. The structure of the annihilating ideal and a vanishing condition is given. Using the annihilating ideal, the structure of quasi-finite models of the Virasoro (2,q) minimal models are studied, and their intimate relation to the Gordon identities are discussed. We also show the examples in which the reduced algebras of W 
              n 
              and W 
              ℓ 
              algebras at the same central charge are isomorphic to each other.},
	language = {en},
	number = {supp01a},
	urldate = {2026-02-09},
	journal = {International Journal of Modern Physics A},
	author = {Feigin, Boris L. and Nakanishi, Tomoki and Ooguri, Hirosi},
	month = apr,
	year = {1992},
	pages = {217--238},
	file = {Feigin et al. - 1992 - THE ANNIHILATING IDEALS OF MINIMAL MODELS.pdf:C\:\\Users\\bashi\\Zotero\\storage\\52XG8MBV\\Feigin et al. - 1992 - THE ANNIHILATING IDEALS OF MINIMAL MODELS.pdf:application/pdf},
}

@article{andrews_analytic_1974,
	title = {An {Analytic} {Generalization} of the {Rogers}-{Ramanujan} {Identities} for {Odd} {Moduli}},
	volume = {71},
	issn = {0027-8424, 1091-6490},
	url = {https://pnas.org/doi/full/10.1073/pnas.71.10.4082},
	doi = {10.1073/pnas.71.10.4082},
	abstract = {A ( 
              k 
              - 1)-fold Eulerian series expansion is given for II(1 - 
              q 
               
                n 
               
              ) 
              -1 
              , where the product runs over all positive integers 
              n 
              that are not congruent to 0, 
              i 
              or - 
              i 
              modulo 2 
              k 
              + 1. The Rogers-Ramanujan identities are the cases 
              k 
              = 
              i 
              = 2 and 
              k 
              = 
              i 
              + 1 = 2.},
	language = {en},
	number = {10},
	urldate = {2024-08-09},
	journal = {Proceedings of the National Academy of Sciences},
	author = {Andrews, George E.},
	month = oct,
	year = {1974},
	pages = {4082--4085},
	file = {Andrews - 1974 - An Analytic Generalization of the Rogers-Ramanujan.pdf:C\:\\Users\\bashi\\Zotero\\storage\\YEIU54WL\\Andrews - 1974 - An Analytic Generalization of the Rogers-Ramanujan.pdf:application/pdf},
}

@article{arakawa_associated_2015,
	title = {Associated varieties of modules over {Kac}-{Moody} algebras and {$C_2$}-cofiniteness of {$W$}-algebras},
	issn = {1073-7928, 1687-0247},
	url = {https://academic.oup.com/imrn/article-lookup/doi/10.1093/imrn/rnu277},
	doi = {10.1093/imrn/rnu277},
	language = {en},
	urldate = {2026-02-10},
	journal = {International Mathematics Research Notices},
	author = {Arakawa, T.},
	month = feb,
	year = {2015},
	pages = {rnu277},
	file = {Arakawa - 2015 - Associated Varieties of Modules Over Kac-Moody Algebras and C2-Cofiniteness of W-Algebras.pdf:C\:\\Users\\bashi\\Zotero\\storage\\TMRD7J3K\\Arakawa - 2015 - Associated Varieties of Modules Over Kac-Moody Algebras and C2-Cofiniteness of W-Algebras.pdf:application/pdf},
}

@article{capparelli_rogersselberg_2006,
	title = {The {Rogers}–{Selberg} recursions, the {Gordon}–{Andrews} identities and intertwining operators},
	volume = {12},
	copyright = {http://www.springer.com/tdm},
	issn = {1382-4090, 1572-9303},
	url = {http://link.springer.com/10.1007/s11139-006-0150-7},
	doi = {10.1007/s11139-006-0150-7},
	language = {en},
	number = {3},
	urldate = {2025-05-25},
	journal = {The Ramanujan Journal},
	author = {Capparelli, S. and Lepowsky, J. and Milas, A.},
	month = dec,
	year = {2006},
	pages = {379--397},
	file = {Capparelli et al. - 2006 - The Rogers–Selberg recursions, the Gordon–Andrews identities and intertwining operators.pdf:C\:\\Users\\bashi\\Zotero\\storage\\PZ5H47XG\\Capparelli et al. - 2006 - The Rogers–Selberg recursions, the Gordon–Andrews identities and intertwining operators.pdf:application/pdf},
}

@misc{feigin_quasi-particles_1993,
	title = {Quasi-particles models for the representations of {Lie} algebras and geometry of flag manifold},
	url = {http://arxiv.org/abs/hep-th/9308079},
	doi = {10.48550/arXiv.hep-th/9308079},
	abstract = {We give a new interpretation and proof of the "quasi-particle" type character formulas for integrable representations of the simply-laced affine Kac-Moody algebras through a new "semi-infinite" construction of such representations. We compare formulas of this kind to other formulas obtained using the geometry of the corresponding flag manifold and in particular give a new proof to the Gordon type identities.},
	urldate = {2026-04-24},
	publisher = {arXiv},
	author = {Feigin, Boris and Stoyanovsky, A. V.},
	month = aug,
	year = {1993},
	note = {arXiv:hep-th/9308079},
	keywords = {Mathematics - Quantum Algebra, High Energy Physics - Theory},
	annote = {Comment: 35 pages},
	file = {Preprint PDF:C\:\\Users\\bashi\\Zotero\\storage\\B74RHAEW\\Feigin and Stoyanovsky - 1993 - Quasi-particles models for the representations of Lie algebras and geometry of flag manifold.pdf:application/pdf;Snapshot:C\:\\Users\\bashi\\Zotero\\storage\\EBKRRXHS\\9308079.html:text/html},
}

@article{calinescu_vertex-algebraic_2008,
	title = {Vertex-algebraic structure of the principal subspaces of certain {A} 1 ( 1 ) -modules, {II}: {Higher}-level case},
	volume = {212},
	issn = {00224049},
	shorttitle = {Vertex-algebraic structure of the principal subspaces of certain {A} 1 ( 1 ) -modules, {II}},
	url = {https://linkinghub.elsevier.com/retrieve/pii/S0022404908000121},
	doi = {10.1016/j.jpaa.2008.01.003},
	language = {en},
	number = {8},
	urldate = {2026-04-24},
	journal = {Journal of Pure and Applied Algebra},
	author = {Calinescu, C. and Lepowsky, J. and Milas, A.},
	month = aug,
	year = {2008},
	pages = {1928--1950},
	file = {PDF:C\:\\Users\\bashi\\Zotero\\storage\\8XNUF26R\\Calinescu et al. - 2008 - Vertex-algebraic structure of the principal subspaces of certain A 1 ( 1 ) -modules, II Higher-leve.pdf:application/pdf},
}

\end{document}